\newcommand{\fe}{\mathfrak {e}}
\newcommand{\fg}{\mathfrak {g}}
\newcommand{\fk}{\mathfrak {k}}
\newcommand{\fl}{\mathfrak {l}}
\newcommand{\fp}{\mathfrak {p}}
\newcommand{\fo}{\mathfrak {o}}
\newcommand{\fs}{\mathfrak {s}}
\newcommand{\fu}{\mathfrak {u}}
\newcommand{\fz}{\mathfrak {z}}
\newcommand{\dyr}{{\rm D}^r}
\newcommand{\si}{\sigma}
\newcommand{\thq}{\theta}
\newcommand{\sk}{\bigskip}
\newcommand{\lra}{\longrightarrow}
\newcommand{\KK}{\De_1^d}
\newcommand{\dy}{{\rm D}}
\newcommand{\dyo}{{\rm D}^1}
\newcommand{\dyt}{{\rm D}^2}
\newcommand{\co}{{\cal O}}
\newcommand{\De}{\Delta}
\newcommand{\al}{\alpha}
\newcommand{\be}{\beta}
\newcommand{\de}{\delta}
\newcommand{\ga}{\gamma}
\newcommand{\vf}{\varphi}
\newcommand{\we}{\wedge}
\newcommand{\bsl}{\backslash}
\newcommand{\aut}{{\rm Aut}}
\newcommand{\inv}{{\rm Inv}}
\newcommand{\om}{\omega}
\newcommand{\Th}{\Theta}
\newcommand{\ad}{{\rm ad}}
\newtheorem{proposition}{Proposition}[section]
\newtheorem{theorem}[proposition]{Theorem}
\newtheorem{corollary}[proposition]{Corollary}
\newtheorem{lemma}[proposition]{Lemma}
\newtheorem{definition}[proposition]{Definition}
\newtheorem{remark}[proposition]{Remark}
\newcommand{\bz}{\mathbb {Z}}
\newcommand{\bc}{\mathbb {C}}
\newcommand{\br}{\mathbb {R}}
\newcommand{\bh}{\mathbb {H}}
\begin{document}

\thispagestyle{empty}

\begin{center}
{\Large \sc Affine Vogan Diagrams
and Symmetric Pairs}
\end{center}

\sk

\sk

{\large Meng-Kiat Chuah

\sk

Department of Mathematics,

National Tsing Hua University,

Hsinchu, Taiwan.

\sk

{\tt chuah@math.nthu.edu.tw}}

\sk

\sk

\sk

\sk

\sk

\sk

\noindent {\bf Abstract: }

We introduce the affine Vogan diagrams
of complex simple Lie algebras.
These are generalizations of Vogan diagrams,
and we study the involutions represented by them.
We apply these diagrams to study the symmetric pairs,
in particular the
associated and symplectic symmetric pairs.

\sk

\sk

\noindent {\bf 2010 Mathematics Subject Classification:}

17B20, 17B22, 53C35.

\sk

\sk

\noindent {\bf Keywords: }

complex simple Lie algebras,
affine Vogan diagrams, involutions, symmetric pairs.


\newpage
\section{Introduction}
\setcounter{equation}{0}

Let $L$ be a finite dimensional complex simple Lie algebra
with Dynkin diagram $\dy$.
A Vogan diagram on $\dy$ \cite[VI-8]{kn} represents
an $L$-involution, and conversely an $L$-involution
can be represented by a Vogan diagram on $\dy$.
Let $\dyr$ be the affine Dynkin diagram of $L$ \cite[Ch.4]{ka90}.
We similarly introduce the {\it affine Vogan diagram} on $\dyr$.
Here $\dyo$ is the extended Dynkin diagram,
and we study the affine Vogan diagrams which are not extensions of
Vogan diagrams on $\dy$.
We show that they represent
$L$-involutions, and furthermore we determine these involutions.
We then provide two applications of affine Vogan diagrams,
namely the studies of associated symmetric pairs
and symplectic symmetric pairs.
We now describe our projects in more details.

The affine Dynkin diagrams include $\dyo$ for all complex
simple Lie algebras, $\dyt$ for $A_n, D_n, E_6$,
and $\dy^3$ for $D_4$.
We ignore $\dy^3$ and
consider $\dyr$ for $r=1,2$.
Let $\inv(L)$ denote the $L$-involutions,
namely $\bc$-linear $L$-automorphisms $\si$
such that $\si^2=1$.
A convenient way to describe $\inv(L)$ is given by the
Kac diagrams, or by their invariant subalgebras
(see Definition \ref{dkd} and Theorem \ref{kd}).
In Definition \ref{dvd}, we define affine Vogan diagrams on $\dyr$
and discuss how they may represent $\inv(L)$.
In this definition, we use diagram involutions and circlings
(i.e. a vertex is circled or not) instead of
paintings (i.e. a vertex has white or black color)
because we have used the paintings for Kac diagrams.

We focus on $\dyo$ for now.
A Vogan diagram on $\dy$ always extends to
an affine Vogan diagram on $\dyo$.
But an affine Vogan diagram may not be an extension
of a Vogan diagram, and in that case it is less
clear which involution it represents.
The next theorem solves this problem.
We always regard involutions which are conjugated by
$L$-automorphisms as the same.
We say that two affine Vogan diagrams are
equivalent if they represent the same involution.
Equivalent diagrams are classified in \cite{ch2}.

\begin{theorem}
$\,$

\noindent {\rm (a)} \,
Given a Vogan diagram
on $\dy$ which represents $\si \in \inv(L)$,
it has two extensions to affine Vogan diagrams on
$\dyo = \dy \cup \{\vf\}$, where $\vf$ is fixed by
the diagram involution.
One extension represents $\si$, and the other does not
represent an involution.

\noindent {\rm (b)} \,
Every affine Vogan diagram on $\dyo$
which is not an extension of a
Vogan diagram represents an $L$-involution $\si$.
It is equivalent to an affine Vogan diagram
in the first column of Figure 1,
where $\si$ is revealed by the accompanying
Kac diagram and invariant subalgebra.
\label{thm1}
\end{theorem}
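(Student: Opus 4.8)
The plan is to reduce both parts to the explicit recipe by which an affine Vogan diagram on $\dyo$ produces an automorphism of $L$, and then to track the single extra node $\vf=-\theta$, where $\theta$ is the highest root. Recall that the diagram data consist of an involution $\tau$ of $\dyo$ together with a circling of its $\tau$-fixed vertices, and that the resulting $\si$ acts on a Chevalley system by $\si(E_{\al_i})=\ep_i E_{\tau(\al_i)}$, with $\ep_i=-1$ exactly on the circled vertices (and the standard convention on the two-element $\tau$-orbits), extended to $L$ by the bracket relations. Since $L$ is generated by $E_{\al_1},\dots,E_{\al_n},F_{\al_1},\dots,F_{\al_n}$, the value of $\si$ on the affine root vector $E_{\al_0}=E_{-\theta}$ is already determined by the data on $\dy$; this forced value is the hinge of the argument.

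For (a): since $\theta$ is the unique highest root it is fixed by every diagram automorphism, so $\vf=\al_0$ is $\tau$-fixed and the two extensions are exactly ``$\vf$ circled'' and ``$\vf$ not circled''. First I would compute, from the Vogan diagram on $\dy$ representing $\si$, the sign $\ep$ defined by $\si(E_{-\theta})=\ep\,E_{-\theta}$; writing $\theta=\sum_i a_i\al_i$ and iterating the bracket that builds $E_{-\theta}$ from the $F_{\al_i}$, this sign is an explicit parity in the marks $a_i$ of the circled vertices. The affine recipe, read on the full node set $\{\al_0,\dots,\al_n\}$, prescribes $\si(E_{\al_0})=\pm E_{\al_0}$ according to the circling of $\vf$. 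Exactly one prescription agrees with the forced sign $\ep$; that extension reproduces $\si$ and so represents an involution. For the other extension the prescribed sign contradicts the value forced by the finite generators, so in the loop construction the order of the resulting automorphism is no longer $2$ --- concretely the order datum $\sum a_i s_i$ changes parity --- and the diagram fails to represent an involution. The main step is this parity computation, together with checking that it is insensitive to the equivalences, so that ``represents $\si$'' versus ``not an involution'' is a property of the extension and not of the chosen representative.

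For (b): an affine Vogan diagram is not an extension of a Vogan diagram precisely when $\tau$ moves $\vf$, i.e. $\tau(\al_0)\neq\al_0$; these are the involutive symmetries of $\dyo$ coming from the fundamental group $P/Q$ rather than from $\aut(\dy)$. I would first show $\si^2=1$ directly. Realize $\tau$ on $L$ by an automorphism $\hat\tau$ (a finite diagram automorphism composed with the inner automorphism implementing the relevant element of the extended affine Weyl group), chosen so that $\hat\tau^2=1$; since $\tau$ preserves the marks, $a_{\tau(i)}=a_i$, this realization descends to $L$. Writing $\si=\chi\hat\tau$ with $\chi$ the sign automorphism of the circling, and using that circlings sit only on $\tau$-fixed vertices so that $\chi$ is $\hat\tau$-stable, we get $\si^2=\chi\,({}^{\hat\tau}\!\chi)\,\hat\tau^2=\chi^2=1$. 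Here $\al_0$ lies in a two-element $\tau$-orbit and is therefore uncircled, so no sign is imposed on $E_{-\theta}$ beyond the one forced by the generators, and the conflict underlying part (a) cannot occur. The key point to verify carefully is that $\hat\tau$ can indeed be chosen with $\hat\tau^2=1$ while inducing the prescribed node permutation; this is where the move of $\al_0$, an inner rather than a diagram phenomenon, must be handled.

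Finally, to identify $\si$ and land it in Figure 1, I would pass from the circling/involution data to a Kac diagram on the same $\dyo$: translate each circling into Kac labels $s_i\in\{0,1\}$ by a Cayley-type base change, verify that the order datum equals $2$ so that Theorem \ref{kd} applies, and read off the invariant subalgebra $\fk$ from the white subdiagram. Using the equivalence relation and its classification in \cite{ch2}, each non-extension diagram is reduced to one canonical representative, and a finite check over the types $A_n,B_n,C_n,D_n,E_6,E_7,E_8,F_4,G_2$ --- one involutive symmetry-class of $\dyo$ moving $\al_0$ at a time --- produces exactly the rows of Figure 1 together with their Kac diagrams and subalgebras. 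I expect the principal obstacle to be this last bookkeeping: proving that the circling-to-painting translation is well defined and equivariant under the equivalences, and that the resulting normal forms are complete and non-redundant, so that the first column of Figure 1 really exhausts all non-extension affine Vogan diagrams up to equivalence.
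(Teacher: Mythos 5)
Your part (a) is essentially the paper's own argument (Proposition \ref{exx}): the value of $\si$ on $L_\vf$ is forced by the data on $\dy$ because $E_{-\theta}$ is generated by the finite root vectors, so exactly one circling of $\vf$ is consistent, and the inconsistent one fails the parity criterion of Theorem \ref{vd}(b). That part is sound and matches the paper.

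For part (b) there are two genuine gaps. First, your existence argument $\si=\chi\hat\tau$ hinges on the lemma you yourself flag but do not prove: that a diagram involution $\tau$ of $\dyo$ moving $\al_0$ lifts to $\hat\tau\in\aut(L)$ with $\hat\tau^2=1$, normalized to act as $+1$ on the root spaces of $\tau$-fixed vertices. This does not follow from the isomorphism theorem, because $\tau$ does not stabilize $\Pi$ (it sends some simple roots to the lowest root), so any lift has a nontrivial inner part and its involutivity is exactly the issue. Moreover, your claim that ``the conflict underlying part (a) cannot occur'' because $\al_0$ is uncircled overlooks that adjacent two-element $d$-orbits also create forced signs ($\si=-1$ on $L_{\al+d\al}$ whenever $\al$, $d\al$ are adjacent and swapped), which is why $\co$ in Theorem \ref{vd}(b) counts those orbits; evenness of $\sum_{x\in\co}m_x$ for non-extension diagrams is not automatic from ``$\al_0$ uncircled'' but is verified in the paper by reducing, via the moves classified in \cite{ch2}, to the finitely many minimal-circling diagrams of Figure 1 and checking parity there (Proposition \ref{licy}). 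Second, and more seriously, your identification step --- ``translate the circling into Kac labels by a Cayley-type base change and read off the invariant subalgebra from the white subdiagram'' --- glosses over the central difficulty of the theorem. When $d$ moves $\al_0$, the involution $\si$ acts nontrivially on the Cartan subalgebra $H$, so there is no white subdiagram to read: attaching the correct Kac diagram to such a $\si$ is precisely the content of the paper's Sections 3 and 4, carried out by computing $\dim L^\si$ root-theoretically (Proposition \ref{lmao}) and invoking Cartan's rigidity ``$\dim L^\si$ determines $\si$'' (Theorem \ref{cahe}). That rigidity genuinely fails in type $D_n$ when $n>4$ is a perfect square (Theorem \ref{cahe}(c)), and dimension alone cannot separate the two inequivalent $E_7$ diagrams, whose candidates are $E_6+\bc$, $A_7$ and $D_6+A_1$ all of dimension-compatible types; the paper needs the stabilization $D_n\hookrightarrow D_{n+2}$ and the non-embedding facts $F_4\not\subset A_7, D_6$ and $C_4\not\subset D_6$ (Proposition \ref{bun}) to resolve these. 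Your plan treats this as bookkeeping, but without an argument replacing those rigidity-failure cases, the proof of the last clause of Theorem \ref{thm1}(b) is missing.
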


We next discuss some applications of affine Vogan diagrams.
Let $H$ be a $\si$-stable Cartan subalgebra of $L$
with root space decomposition $L= H + \sum_\De L_\al$.
Then $\si$ permutes the root spaces.
The vertices of $\dy$ (resp. $\dyo$)
represent some $\Pi \subset \De$
(resp. $\Pi^1 = \Pi \cup \{\vf\}$),
where $\Pi$ is a simple system with lowest root $\vf$.
We say that $\si$ stabilizes $\Pi$ (resp. $\Pi^1$)
if it stabilizes $\sum_\Pi L_\al$ (resp. $\sum_{\Pi^1} L_\al$),
and it leads to a Vogan diagram on $\dy$
(resp. affine Vogan diagram on $\dyo$).
Given $\si$, there always exist
$\si$-stable $H$ and $\Pi$,
and they imply that $\si$ also stabilizes $\Pi^1$.
So there is apparently no reason
to deal with ``inferior'' Cartan subalgebras
where $\si$ stabilizes merely
$\Pi^1$ but not $\Pi$.
However, such Cartan subalgebras are inevitable
when there are {\it two} involutions,
which arise in the context of symmetric pairs.

A {\it symmetric pair} of $L$ is a pair
$(\fg, \fg^\si)$, where $\fg$ is a real form of $L$,
$\si \in \inv(\fg)$, and $\fg^\si$ is its invariant subalgebra.
They are classified by Berger \cite{be}.
There exists a Cartan involution $\thq$ of $\fg$
which commutes with $\si$. By $\bc$-linear extensions,
$\thq$ and $\si$ extend to $L$.
This leads to a correspondence
\begin{equation}
 \{\mbox{symmetric pairs } (\fg, \fg^\si)\}
\; \longleftrightarrow \;
\{\mbox{commuting $L$-involutions } (\thq, \si)\} ,
\label{idsp}
\end{equation}
where the latter are studied by Helminck \cite{helm}.

A {\it double Vogan diagram} on $\dyr$ is the superimpose of
a Kac diagram and an affine Vogan diagram (Definition \ref{suim}).
Every symmetric pair can be represented by
a double Vogan diagram on $\dyr$ \cite[Thm.1.3]{ch}.
If $r=1$,
there exists a Cartan subalgebra $H$ such that
$\thq|_H =1$, $\si H = H$,
and $\si$ stabilizes $\Pi^1$.
We may not have $\si$-stable $\Pi$,
so the affine Vogan diagram
may not be an extension of a Vogan diagram on $\dy$.

Let $\thq$ and $\si$ commute as above.
Two symmetric pairs $(\fg,\fg^\si)$ and $(\fg,\fg^\tau)$
are said to be {\it associated} to each other if $\si \thq = \tau$.
In other words if $\fg = \fk + \fp$ is a Cartan decomposition
with respect to $\thq$, then
$\si|_\fk = \tau|_\fk$ and $\si|_\fp = -\tau|_\fp$.

We can classify the symmetric pairs by double Vogan diagrams,
as carried out in \cite[\S7]{ch}.
However, $(\fg, \fg^\si)$
may be mismatched with the double Vogan diagram
of its associate symmetric pair
$(\fg, \fg^{\si \thq})$, especially when
the affine Vogan diagram
is not an extension of a Vogan diagram.
Such errors happen in \cite{ch}, and
we provide their corrections with the help of Theorem \ref{thm1}.

\begin{theorem}
{\rm (Corrigendum to \cite[pp.1741,1742,1747]{ch})}
The following symmetric pairs are represented by the double
Vogan diagrams in Figure 2:
\begin{equation}
\begin{array}{cl}
\mbox{\rm (a)} & (\fs \fp(m,m), \fs \fp(m, \bc)) \;,\;
(\fs \fp(m,m), \fg \fl(m, \bh)) ,\\
\mbox{\rm (b)} & (\fs \fo(n,n), \fg \fl(n, \br)) \;,\;
(\fs \fo(n,n), \fs \fo(n, \bc)) ,\; n \mbox{ even}, \\
\mbox{\rm (c)} & (\fs \fo(n,n), \fg \fl(n, \br)) \;,\;
(\fs \fo(n,n), \fs \fo(n, \bc)) ,\; n \mbox{ odd}.
\end{array}
\label{rfi}
\end{equation}
\label{thmcorr}
\end{theorem}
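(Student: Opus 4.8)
The plan is to verify each listed symmetric pair against the double Vogan diagram that Figure 2 assigns to it, using the correspondence \eqref{idsp} between symmetric pairs $(\fg, \fg^\si)$ and commuting involution pairs $(\thq, \si)$. In every case $L$ is of type $C_{2m}$ in (a) and of type $D_n$ in (b) and (c), and the real form --- $\fs\fp(m,m)$ or $\fs\fo(n,n)$ --- is determined by the Cartan involution $\thq$, which is recorded by the Kac painting of the double Vogan diagram. First I would choose a Cartan subalgebra $H$ with $\thq|_H = 1$ and $\si H = H$, so that $\si$ stabilizes $\Pi^1$; on such an $H$ the painting half of each diagram in Figure 2 encodes $\thq$, hence $\fg$, and is common to the two associated members of a pair, while the circlings and the diagram involution encode $\si$. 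This places us in the regime where the affine Vogan diagram need not be an extension of a Vogan diagram on $\dy$.

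Next, for each $\si$ I would read off the affine Vogan half of the diagram and determine the involution it represents. When this half extends a Vogan diagram the invariant subalgebra $\fg^\si$ follows by the usual restriction to $\dy$; but in the cases at hand $\si$ stabilizes $\Pi^1$ without stabilizing $\Pi$, so the diagram is genuinely affine. Here I would invoke Theorem \ref{thm1}(b): replace the affine Vogan diagram by its equivalent representative in the first column of Figure 1 and read $\si$ off the accompanying Kac diagram and invariant subalgebra. Carrying this out should identify $\fg^\si$ as $\fs\fp(m,\bc)$ and $\fg\fl(m,\bh)$ in (a), and as $\fg\fl(n,\br)$ and $\fs\fo(n,\bc)$ in (b) and (c), matching the labels of Figure 2.

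It then remains to confirm the associate pairing within each line. Since the two members satisfy $\tau = \si\thq$, equivalently $\si|_\fk = \tau|_\fk$ and $\si|_\fp = -\tau|_\fp$, and since $\thq|_H = 1$, passing from $\si$ to $\tau$ fixes the Kac painting (hence $\fg$) while altering the circlings and possibly the diagram involution on $\Pi^1$ in a controlled way. I would compute this passage explicitly and check that it produces exactly the diagram that Figure 2 attaches to the associate member, again identifying its invariant subalgebra through Theorem \ref{thm1}(b). The split into $n$ even and $n$ odd in (b) and (c) enters precisely here, since the action of $\thq$ on the affine node $\vf$ and on the branch of $D_n$ --- and therefore the equivalence class of the resulting affine Vogan diagram --- depends on the parity of $n$.

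The main obstacle is exactly the point at which \cite{ch} went wrong. When the affine Vogan diagram does not extend a Vogan diagram, $\si$ does not stabilize $\Pi$, so $\fg^\si$ cannot be obtained by naive restriction to $\dy$; the earlier treatment effectively performed such a restriction and thereby mismatched $(\fg, \fg^\si)$ with the double Vogan diagram of its associate $(\fg, \fg^{\si\thq})$. Theorem \ref{thm1}(b) is precisely the tool that removes this ambiguity, by pinning down the involution carried by a genuinely affine Vogan diagram, so the proof reduces to applying it to each diagram in Figure 2 and verifying the associate correspondence by the parity-sensitive computation above.
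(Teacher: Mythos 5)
Your treatment of cases (a) and (b) is essentially the paper's argument: identify the affine Vogan half of each diagram with an entry in the first column of Figure 1, invoke Theorem \ref{thm1}(b) to obtain $L^\si$ (namely $C_m^2$ and $A_{2m-1}+\bc$ for (a), and $A_{n-1}+\bc$ and $D_{n/2}^2$ for (b)), and match these against the complexifications $\fs\fp(m,\bc)\otimes\bc$, $\fg\fl(m,\bh)\otimes\bc$, $\fg\fl(n,\br)\otimes\bc$, $\fs\fo(n,\bc)\otimes\bc$. That part is sound.

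The gap is in case (c). For $n$ odd, $\fs\fo(n,n)$ is not of equal rank type, so its Cartan involution $\thq$ is \emph{outer}; by (\ref{bes}) there is no Cartan subalgebra with $\thq|_H=1$, and the double Vogan diagram lives on the twisted affine diagram $\dyt$, not on $\dyo$. This breaks your plan at both ends: your opening step (``choose $H$ with $\thq|_H=1$'') is impossible here, and Theorem \ref{thm1}(b) --- your stated tool for identifying $\fg^\si$ in all three cases --- is a statement exclusively about affine Vogan diagrams on $\dyo$ and gives no information about diagrams on $\dyt$. Your parity remark correctly senses that $n$ odd behaves differently, but the mechanism you attribute it to (how $\thq$ acts on the affine node $\vf$ and the branch of $D_n$, within an application of Theorem \ref{thm1}(b)) is not available. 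The paper handles (c) by an entirely separate argument: since every vertex $x\in\dyt$ in Figure 2(c) except the endpoints satisfies $\dim L_x=1$, the dimension-counting method of Section 3 applies; the left diagram (middle vertex uncircled) yields an involution $\si_1$ acting as $+1$ on strictly more root spaces than the involution $\si_2$ of the right diagram (middle vertex circled), so $\dim L^{\si_1}>\dim L^{\si_2}$, and comparing $\dim\fg\fl(n,\br)=n^2$ with $\dim\fs\fo(n,\bc)=n^2-n$ forces $L^{\si_1}=\fg\fl(n,\br)\otimes\bc$ and $L^{\si_2}=\fs\fo(n,\bc)\otimes\bc$. Some replacement of this kind is needed for your proof of (c) to go through.
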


Another application of Theorem \ref{thm1} is the classification
of symplectic symmetric pairs.
The symmetric pair $(\fg,\fg^\si)$ is the infinitesimal version of
symmetric space $G/G^\si$, where $G$ is a connected Lie group
whose Lie algebra is $\fg$.
If $G/G^\si$ has a $G$-invariant symplectic form,
we say that $(\fg,\fg^\si)$ is {\it symplectic}.
If in addition $G/G^\si$ has a $G$-invariant complex structure
such that the symplectic form is pseudo-Hermitian,
we say that $(\fg,\fg^\si)$ is {\it pseudo-Hermitian}.
The pseudo-Hermitian symmetric pairs are classified by Shapiro
\cite{sh}, and the remaining symplectic symmetric pairs
are classified by Bieliavsky \cite{bi}.
The works of \cite{sh}\cite{bi} are done by algebraic
computations of $\fg^\si$,
and we intend to provide an alternative combinatorial classification
by diagrams.
This has been done for the pseudo-Hermitian symmetric pairs \cite{mrl},
and here we do the same for the remaining
symplectic symmetric pairs.

We denote a double Vogan diagram on $\dyr$ by $(p,c,d)$, where
\[
\begin{array}{cl}
p: & \mbox{Kac diagram, namely vertices of $\dyr$ are painted as
white or black,}\\
(c,d): & \mbox{affine Vogan diagram,
namely $d$ is a diagram involution on $\dyr$,}\\
& \mbox{each vertex fixed by $d$ is circled or not circled by $c$.}
\end{array}
\]

In (\ref{cana}), we construct a diagram mapping
$\pi : \dy \lra \dyt$ such that
$\dyt = \pi(\dy) \cup \{\vf\}$.
If $c$ is a circling on $\dyt$,
we define a circling $\pi^{-1}(c)$ on $\dy$
by letting $\al \in \dy$ be circled if and only if $\pi(\al)$
is circled.
Let $c(\dyt)$ denote the circled vertices.
The vertices $\al \in \dyr$ are equipped with positive integers
$m_\al$ such that $\sum_{\dyr} m_\al \al =0$ \cite[Ch.4]{ka90}.

\begin{theorem}
A symmetric pair $(\fg, \fg^\si)$ is symplectic non-pseudo-Hermitian
if and only if it can be represented by a double Vogan diagram
$(p,c,d)$ on $\dyr$ such that:

\noindent {\rm (a)} $\,$ For $r=1$, $(c,d)$
is equivalent to a Kac diagram on $\dyo$ with two black vertices,
and is not an extension of a Vogan diagram on $\dy$;

\noindent {\rm (b)} $\,$ For $r=2$,
either $c(\dyt) = \emptyset$ and $d \neq 1$, or
$\sum_{c(\dyt)} m_x$ is even and
$\pi^{-1}(c) = \{\al, \be\}$
with $m_\al = m_\be =1$ and
$\pi(\al)=\pi(\be)$.
\label{thm2}
\end{theorem}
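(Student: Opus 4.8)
The plan is to reduce the symplectic and pseudo-Hermitian conditions to Lie-theoretic data on $\fz(\fg^\si)$ and then read that data off the double Vogan diagram. Write $\fg = \fg^\si \oplus \fm$ with $\fm$ the $(-1)$-eigenspace of $\si$. Since $\fm$ is $\ad(\fg^\si)$-stable, any $Z \in \fz(\fg^\si)$ gives an invariant $2$-form $\om_Z(X,Y) = B(Z,[X,Y])$ on $\fm$, with $B$ the Killing form, and on a semisimple symmetric space such forms are automatically closed. Thus $(\fg,\fg^\si)$ is symplectic exactly when some $Z \in \fz(\fg^\si)$ has $\ad Z|_\fm$ invertible, and it is pseudo-Hermitian precisely when such a $Z$ can be chosen elliptic, the normalized $\ad Z|_\fm$ then being an invariant complex structure. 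First I would record this dichotomy: symplectic non-pseudo-Hermitian means $\fz(\fg^\si)$ supports a nondegenerate $\om_Z$ but no elliptic $Z$ does, equivalently the distinguished central element acts on $\fm$ with real (hyperbolic) rather than imaginary eigenvalues. Since the pseudo-Hermitian pairs are already matched to their diagrams in \cite{mrl} (and listed by Shapiro \cite{sh}), it suffices to characterize the symplectic pairs and then delete the elliptic ones.

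Next I would translate the two requirements into the diagram. The dimension of the center is governed by the Kac diagram of $\si$: removing the black vertices leaves the Dynkin diagram of the semisimple part of $L^\si$, and $\dim_\bc \fz(L^\si)$ is one less than the number of black vertices. For $r=1$ this is where Theorem \ref{thm1}(b) enters, since an affine Vogan diagram $(c,d)$ that is not an extension of a Vogan diagram still represents a definite $\si \in \inv(L)$, and Figure 1 supplies the equivalent Kac diagram and invariant subalgebra. Demanding that $(c,d)$ be equivalent to a Kac diagram with exactly two black vertices forces $\fz(\fg^\si)$ to be one-dimensional, so the symplectic form, when it exists, is essentially unique. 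The remaining clause, that $(c,d)$ is \emph{not} an extension of a Vogan diagram on $\dy$, is what I would show is equivalent to the generator of $\fz(\fg^\si)$ being hyperbolic: when $(c,d)$ extends a Vogan diagram, $\si$ stabilizes the finite simple system $\Pi$ and this generator is elliptic, landing us in the Hermitian list of \cite{mrl}, whereas using the affine vertex $\vf$ nontrivially is precisely what produces the inferior Cartan subalgebra on which $\thq$ negates the generator. Combining the two clauses gives symplectic but not pseudo-Hermitian, and conversely every such pair has one-dimensional center and a hyperbolic generator, hence meets (a).

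For $r=2$ the same scheme applies through the folding $\pi : \dy \lra \dyt$. I would use $\pi$ to transport circlings between the finite and twisted diagrams and compute $\fz(\fg^\si)$ and the $\thq$-type of its generator from the fiber data. The sub-case $c(\dyt) = \emptyset$, $d \neq 1$ is the purely outer situation, where the diagram involution alone produces a one-dimensional hyperbolic center; in the sub-case with circlings, $\pi^{-1}(c) = \{\al,\be\}$ with $m_\al = m_\be = 1$ and $\pi(\al)=\pi(\be)$ is the outer analogue of ``two black vertices,'' pinning the center to one dimension, while the parity condition $\sum_{c(\dyt)} m_x$ even is what decides whether the generator is fixed or negated by $\thq$, i.e. elliptic versus hyperbolic. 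As in the inner case, the elliptic configurations are pseudo-Hermitian and are removed, leaving exactly the symplectic non-pseudo-Hermitian ones.

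The main obstacle, I expect, is establishing the elliptic-versus-hyperbolic dichotomy intrinsically from the diagram, especially for $r=2$: one must prove that ``not an extension of a Vogan diagram'' (for $r=1$) and the parity of $\sum_{c(\dyt)} m_x$ (for $r=2$) genuinely detect the sign of $\thq$ on the central generator, and that no alternative choice of $Z$ or of invariant complex structure restores pseudo-Hermiticity once these conditions hold. The cleanest way to finish is to run the finite case analysis using Figure 1 and the map $\pi$, read off each invariant subalgebra and its center, determine the $\thq$-type of the generator in each case, and then verify that the resulting symmetric pairs coincide term by term with Bieliavsky's classification \cite{bi} of the remaining symplectic symmetric pairs.
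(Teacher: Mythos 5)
Your opening reduction is the same as the paper's: a symmetric pair is symplectic non-pseudo-Hermitian precisely when $\dim \fz(L^\si)=1$ and $\thq=-1$ on $\fz(L^\si)$, and the job is to read these two conditions off the double Vogan diagram. However, in part (b) you misassign which diagram condition controls which Lie-theoretic property, and this is a genuine error rather than a gap in detail. You claim that the parity of $\sum_{c(\dyt)} m_x$ "decides whether the generator is fixed or negated by $\thq$." It does not. In the paper, even parity is exactly the condition that $\si$ is \emph{inner} (citing \cite[Prop.8.2(b)]{c1}); it is needed first so that $\fz(L^\si)\neq 0$ at all (an outer $\si$ has trivial center), and second so that the circling $c$ on $\dyt$ determines a well-defined $\si$ in the first place --- this is Proposition \ref{jass}, which addresses the fact that when $\dim L_x = 2$ the involution may have distinct eigenvalues on $L_x$, a subtlety your transport of circlings through $\pi$ ignores. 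The condition that actually decides compact versus split center (your elliptic versus hyperbolic dichotomy) is $\pi(\al)=\pi(\be)$: given $\pi^{-1}(c)=\{\al,\be\}$ with $m_\al=m_\be=1$ (which is what pins $\dim\fz(L^\si)=1$, via (\ref{big})), one has $\thq=-1$ on $\fz(L^\si)$ if and only if $\thq$ interchanges $L_\al$ and $L_\be$ (\cite[Prop.2.4(b)]{ch}), i.e.\ if and only if $\al$ and $\be$ lie in the same fiber of $\pi$. A proof built on your attribution fails concretely: a circling with even parity, $m_\al=m_\be=1$ but $\pi(\al)\neq\pi(\be)$ has $\thq=+1$ on the center and is pseudo-Hermitian (it belongs to the list of \cite{mrl}), yet your criterion cannot separate it from the hyperbolic case, since for you the parity is doing that job.

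There are two further gaps. For $r=1$, your central claim --- that "not an extension of a Vogan diagram" detects the hyperbolic generator --- is asserted but not proved (you name it yourself as the main obstacle). The paper settles it by two separate exclusions: the two-black-vertex condition forces $\si$ to be inner of Hermitian type, which rules out extensions with $d\neq 1$ stabilizing $\dy$ (those make $\si$ outer, hence $\fz(L^\si)=0$); and for extensions with $d=1$ one has $\thq=1$ on $\fz(L^\si)$ by \cite[Thm.1.1(a)]{mrl}, contradicting the split-center requirement. Some such input is indispensable; it does not follow formally from the diagram definitions. For $r=2$ with $d\neq 1$, your "purely outer situation" gloss skips the required case analysis: $d\neq 1$ occurs only on the Kac diagrams of $\fs\fl(n,\br)$ ($n$ even) and $\fs\fo(n,n)$ ($n$ odd) (\cite[Fig.3]{c1}), and the latter case is resolved by Figure 2(c), i.e.\ it leans on Theorem \ref{thmcorr}. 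Finally, your proposed finish --- verifying the outcome term by term against Bieliavsky's tables --- would forfeit the independence of the classification and would in fact collide with the error in \cite[p.268]{bi} that the paper corrects in Remark \ref{bisa}; the paper instead derives the pairs from \cite[\S 7]{ch} and only afterwards compares with \cite{bi}.
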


\begin{corollary}
The symplectic non-pseudo-Hermitian symmetric pairs are classified by
the double Vogan diagrams with $r=1$ in
Figure 3, and $r=2$ in Figure 4.
\label{fma}
\end{corollary}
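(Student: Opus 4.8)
The plan is to use Theorem \ref{thm2} to convert the geometric classification into a purely combinatorial enumeration. By that theorem, a symmetric pair $(\fg,\fg^\si)$ is symplectic non-pseudo-Hermitian precisely when it is represented by a double Vogan diagram $(p,c,d)$ on $\dyr$ meeting conditions (a) or (b). Thus the corollary amounts to listing, up to equivalence (i.e. up to representing the same symmetric pair), all diagrams satisfying these conditions, and checking that the resulting list coincides with Figure 3 for $r=1$ and Figure 4 for $r=2$. I would organize the argument by affine Dynkin type, treating $r=1$ and $r=2$ separately.

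For $r=1$, condition (a) requires $(c,d)$ to be equivalent to a Kac diagram on $\dyo$ carrying exactly two black vertices and failing to be an extension of a Vogan diagram on $\dy$. First I would run through each untwisted affine diagram $A_n^{(1)}, B_n^{(1)}, \ldots, G_2^{(1)}$ and write down every Kac diagram with two black vertices. I would then discard those equivalent to extensions of Vogan diagrams, using Theorem \ref{thm1}(a),(b) to decide in each case whether the affine Vogan diagram is such an extension and, when it is not, to read off the involution $\si$ that it represents. After reducing the surviving diagrams by the diagram automorphisms and the equivalence of Theorem \ref{thm1}, I would superimpose the painting $p$ to recover the commuting pair $(\thq,\si)$, compute the invariant subalgebra $\fg^\si$, and match each entry against Figure 3.

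For $r=2$ the admissible diagrams fall into the two alternatives of condition (b) on the twisted affine diagrams $A_{2n}^{(2)}, A_{2n-1}^{(2)}, D_{n+1}^{(2)}, E_6^{(2)}$ (the triality diagram $\dy^3$ being excluded throughout). For the first alternative I would enumerate, for each such $\dyt$, the nontrivial diagram involutions $d \neq 1$ paired with the empty circling $c(\dyt) = \emptyset$. For the second alternative I would use the folding map $\pi : \dy \lra \dyt$ of (\ref{cana}) to translate the conditions into constraints on the induced circling $\pi^{-1}(c)$ of $\dy$: the requirements $\pi^{-1}(c) = \{\al,\be\}$ with $\pi(\al) = \pi(\be)$ and $m_\al = m_\be = 1$ select the pair of vertices identified by the folding, while the parity condition on $\sum_{c(\dyt)} m_x$ cuts down the admissible circlings further. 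Computing $\fg^\si$ for each surviving diagram then produces the entries of Figure 4.

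The main obstacle is the bookkeeping around equivalence: distinct triples $(p,c,d)$ can represent the same symmetric pair, and for the non-extension diagrams of Theorem \ref{thm1}(b) one cannot read the involution directly off the painting but must first pass to the equivalent Kac diagram supplied by that theorem. Ensuring the list is both exhaustive and free of repetitions, and that each computed invariant subalgebra matches the expected real form $\fg$ together with its fixed-point subalgebra $\fg^\si$, is the delicate step. Once the invariant subalgebras are identified, comparing them with the known inventory of symplectic symmetric pairs completes the verification that Figures 3 and 4 give the complete classification.
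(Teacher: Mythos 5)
Your proposal is correct and follows essentially the same route as the paper: invoke Theorem \ref{thm2} to reduce the classification to a diagram enumeration, use Theorem \ref{thm1}(b) (i.e.\ Figure 1) to identify the non-extension affine Vogan diagrams and their involutions for $r=1$, handle $r=2$ via the folding map $\pi$ and the two alternatives of condition (b), then superimpose the paintings and read off the symmetric pairs. The only cosmetic difference is that you start the $r=1$ enumeration from Kac diagrams with two black vertices and then filter, whereas the paper starts from the Figure 1 list of non-extension diagrams and selects those whose accompanying Kac diagram has two black vertices; both reduce to the same selection (Figures 1(a), 1(c), 1(f), 1(g), 1(k), 1(l), 1(n)) and the same lookup of symmetric pairs.
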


In Figures 3 and 4, we adopt Cartan's notation
and denote the exceptional real forms by their characters,
as explained in (\ref{chara}).

Figures 3 and 4 are consistent with
Bieliavsky's classification \cite[p.268-269]{bi},
except for a minor error indicated in Remark \ref{bisa}.
Together with the pseudo-Hermitian ones in \cite[Figs.4,5]{mrl},
we obtain an independent diagrammatic classification
of symplectic symmetric pairs.
The classical symplectic
symmetric pairs can often be studied by matrix computations
on the center of $\fg^\si$
(see Definition \ref{lad}).
But for the exceptional cases, the computations
of $\fg^\si$
are more difficult, and hence our diagrams provide a useful alternative
to study such symmetric pairs. They also explicitly reveal
the behaviors of roots under $\thq$ and $\si$.

We remark that affine Vogan diagrams on $\dyr$
have been discussed
in \cite{ba}\cite{ba2}\cite{pa}, but with different meanings.
Let $\dyo = \dy \cup \{\vf\}$, where $\dy$ represents a
simple system $\Pi$ of $L$.
In this article,
$\vf$ represents the lowest root of $\Pi$, so
$(\Pi \cup \{\vf\}) \subset \De \subset H^*$
are linearly dependent.
Hence a circling on $\dyo$ is ``over-determined'',
namely it represents an $L$-involution only if an additional
condition is satisfied (Theorem \ref{vd}(b)).
On the other hand, the vertex $\vf$ has a different meaning in
the affine Kac-Moody
algebras $\widetilde{L}$ \cite{ba}\cite{ba2}.
Their Cartan subalgebras
$\widetilde{H} + \bc c + \bc d$ carry
root systems $\widetilde{\De}$
of the form $\{\al + n \de \;;\; \al \in \De , n \in \bz\}$
and $\{n \de \;;\; 0 \neq n \in \bz\}$,
where $\de$ is an imaginary root \cite[Ch.6]{ka90}.
In that setting, $\vf$ represents
$\mbox{lowest}(\Pi) + \de$, so
$(\Pi \cup \{\vf\}) \subset \widetilde{\De} \subset \widetilde{H}^*$
is a simple system of $\widetilde{L}$.
Then affine Vogan diagrams on $\dyo$ represent involutions on
$\widetilde{L}$. Similarly, \cite{pa} studies
affine Vogan diagrams on $\dyt$ to obtain involutions on the
twisted affine Kac-Moody algebras.

The sections in this article are arranged as follows.
In Section 2, we recall some properties of Kac diagrams
and Vogan diagrams, and introduce the affine Vogan diagrams.
In Section 3, we study some involutions on Lie algebras,
especially the dimensions of their invariant subalgebras.
In Section 4, we find the invariant subalgebras
and prove Theorem \ref{thm1}.
In Section 5, we study associate symmetric pairs and
prove Theorem \ref{thmcorr}.
In Section 6, we study symplectic symmetric pairs,
then prove Theorem \ref{thm2} and Corollary \ref{fma}.

\sk

\noindent {\bf Acknowledgements. }
The author thanks Pierre Bieliavsky
for a helpful discussion.
This work is supported in part
by the Ministry of Science and Technology and
the National Center for Theoretical Sciences of Taiwan.


\newpage
\section{Affine Vogan Diagrams}
\setcounter{equation}{0}

Let $L$ be a complex simple Lie algebra.
In this section, we review the Kac and Vogan diagrams, which are
useful methods to represent $L$-involutions.
We also introduce the affine Vogan diagrams,
which are generalizations of Vogan diagrams.

Let $H$ be a Cartan subalgebra of $L$, with root system
$\De \subset H^*$ and root space decomposition $L = H + \sum_\De L_\al$.
A simple system of $\De$ leads to the Dynkin diagram $\dy$.
Let $\Th$ be a diagram automorphism on $\dy$
of order $r$.
We form the quotient diagram of $\dy$,
where the vertices are $\Th$-orbits of vertices.
For $r=2$, the edges of the quotient diagram follow the
root lengths
\[
\begin{array}{rcl}
\{\mbox{$\Th$-orbits of 2 adjacent vertices}\}
& < & \{\mbox{$\Th$-orbits of 2 non-adjacent vertices}\} \\
& < & \{\mbox{vertices fixed by $\Th$}\}.
\end{array}
\]
Let $\dyr$ consist of the quotient diagram together
with an extra vertex $\vf$ for some lowest weight
\cite[Ch.4]{ka90}. Let
\begin{equation}
 \pi : \dy \lra \dyr
 \label{cana}
 \end{equation}
send each vertex of $\dy$ to its $\Th$-orbit,
so that $\dyr = \pi(\dy) \cup \{\vf\}$.

If $r =1$, then $\dyo = \dy \cup \{\vf\}$
and $\pi$ is the inclusion,
where $\vf$ is the lowest root of $\dy$.
For $r=2$, we illustrate $\pi$ in Figure 6,
where each $\al \in \dy$ is mapped to
$\pi(\al) \in \dyt$ located exactly below $\al$.

There are positive integers
$\{m_\al\}_{\dyr}$ without nontrivial common factor
such that $\sum_{\dyr} m_\al \al =0$
(see Tables Aff 1 and Aff 2 in \cite[Ch.4]{ka90}).
A painting on a diagram assigns white or black color to each vertex.

\begin{definition}
{\rm A} Kac diagram {\rm is a painting on $\dyr$
such that $r \sum_{\rm black} m_\al = 2$.}
\label{dkd}
\end{definition}

If $\si$ is an $L$-involution,
write $L = L^\si + L^{-\si}$ for its $\pm 1$-eigenspaces.
Then $L^\si$ is a reductive Lie algebra acting
on $L^{-\si}$ by the adjoint map. Write
$L^\si = [L^\si, L^\si] + \fz(L^\si)$ to denote
its semisimple part and center.

\begin{theorem}
{\rm \cite[Ch.8]{ka90}}
A Kac diagram represents a unique $L$-involution $\si$
by:

\noindent {\rm (a)} $\;$
The white vertices form the Dynkin diagram of
$[L^\si, L^\si]$,

\noindent {\rm (b)} $\;$
The number of black vertices is $1 + \dim \fz(L^\si)$,

\noindent {\rm (c)} $\;$
The black vertices are the lowest weights of the
$L^\si$-action on $L^{-\si}$.

The Kac diagrams are in bijective
correspondence with the $L$-involutions.
\label{kd}
\end{theorem}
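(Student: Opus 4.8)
The plan is to identify $L$-involutions with $\bz/2$-gradings of $L$ and to reduce each grading to the normal form recorded by a Kac diagram, following Kac's analysis of torsion automorphisms through the affine Kac--Moody algebra. An involution $\si$ is the same datum as the grading $L = L^\si \oplus L^{-\si}$, and conjugate involutions give isomorphic gradings, so it suffices to classify $\bz/2$-gradings up to $\aut(L)$. The image of $\si$ in $\out(L) \cong \aut(\dy)$ is either trivial or an order-two diagram symmetry, and the integer $r$ of $\dyr$ records exactly this order. Fixing a $\si$-stable Cartan subalgebra $H$ and a compatible simple system, I would first arrange $\si = \Th \circ \mathrm{Ad}(\exp(\pi i\, x))$, with $\Th$ the standard diagram automorphism of order $r$ and $x$ a $\Th$-fixed element of the real span of the coroots.

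Since $x$ is $\Th$-fixed, $\si^2 = \mathrm{Ad}(\exp(2\pi i\, x))$, and $\si^2 = 1$ forces $\al(x) \in \bz$ for every root, so $x$ lies in the pertinent coweight lattice. The affine Weyl group (for $r=1$) and the twisted affine Weyl group (for $r=2$) each act simply transitively on alcoves, and I would use this to move $x$ into the closed fundamental alcove of $\dyr$, whose walls are indexed by the nodes and whose defining dependence is $\sum_{\dyr} m_\al \al = 0$. In this chamber the grading is encoded by the degrees $s_\al \in \{0,1\}$ that $\si$ assigns to the affine Chevalley generators $e_\al$; the requirement that $\si$ have order two becomes the balance relation $r \sum_{s_\al = 1} m_\al = 2$, the factor $r$ entering through the $r$-fold twisted loop realization. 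Painting $\al$ black precisely when $s_\al = 1$ then yields a Kac diagram as in Definition \ref{dkd}, and the simple transitivity of the (twisted) affine Weyl group shows that inequivalent diagrams give non-conjugate involutions, which is the asserted bijection.

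It remains to read (a)--(c) off the normal form. The fixed algebra $L^\si$ is the degree-zero part: it is reductive and generated by $H$ together with the $e_\al, f_\al$ for the white nodes, whose sub-diagram is the Dynkin diagram of $[L^\si, L^\si]$, giving (a). Writing $N$ for the number of nodes of $\dyr$ and $w, b$ for the numbers of white and black nodes, $L^\si$ contains the $\Th$-fixed torus of dimension $N-1$, so $\dim \fz(L^\si) = (N-1) - w = b - 1$, which is (b). Finally $L^{-\si}$ is the degree-one $L^\si$-module generated by the black $e_\al$; by the Serre relations each such $e_\al$ is annihilated by every $f_\beta$ with $\beta$ white, hence is a lowest weight vector, and these are exactly the lowest weights of the irreducible constituents of $L^{-\si}$, giving (c).

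I expect the main obstacle to be the rectification step of the first two paragraphs: proving that an arbitrary order-two automorphism is conjugate to the standard form $\Th \circ \mathrm{Ad}(\exp(\pi i\, x))$ with $x$ in the closed alcove. This rests on the conjugacy of maximal tori in the relevant centralizer, which pushes $x$ into $H$, and then on the alcove geometry of the (twisted) affine Weyl group; the same simple transitivity that drives the reduction is what simultaneously supplies the injectivity half of the bijection.
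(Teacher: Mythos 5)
A preliminary remark on the comparison: the paper does not prove Theorem \ref{kd} at all --- it is quoted verbatim from Kac \cite[Ch.8]{ka90}, and the only thing the paper adds is the convention, stated immediately after the theorem, that a Kac diagram is identified with its image under a diagram symmetry. So there is no internal argument to measure your sketch against; what you have reproduced, correctly in outline, is the proof in the cited source: the normal form $\si = \Th \circ \mathrm{Ad}(\exp(\pi i\, x))$, the reduction of $x$ to the fundamental alcove of the (twisted) affine Weyl group, and the reading of (a)--(c) from the resulting $\bz/2$-grading. Your final paragraph deriving (a), (b), (c) is sound, granted the standard fact (which you assert rather than prove) that $L^{-\si}$ is generated as an $L^\si$-module by the black root vectors.

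There is, however, one genuine gap, in the injectivity half of the bijection. Simple transitivity of $W_{\mathrm{aff}} = W \ltimes Q^\vee$ on alcoves does not show that distinct paintings give non-conjugate involutions. Conjugacy of involutions is decided in the adjoint group (for the inner part), enlarged by $\out(L)$, and the corresponding action on $x$ is by the \emph{extended} affine Weyl group $W \ltimes P^\vee$ together with the finite diagram automorphisms; the residual action of this larger group on the closed alcove is exactly the symmetry group of the affine diagram, which is nontrivial in general. Concretely, take $L = A_2$, $r=1$: the three paintings of the affine triangle with two black vertices are pairwise distinct points of the closed alcove, yet all represent the unique inner involution, with $L^\si = A_1 + \bc$. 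For instance the Kac coordinates $(1,1,0)$ and $(0,1,1)$ correspond to $\mathrm{Ad}(\mathrm{diag}(-1,1,1))$ and $\mathrm{Ad}(\mathrm{diag}(-1,1,-1))$ in $\mathrm{PGL}_3$, which are conjugate, even though the two alcove points lie in different $W_{\mathrm{aff}}$-orbits; the conjugating element involves a translation by an element of $P^\vee \setminus Q^\vee$. This is precisely why the paper imposes the identification of Kac diagrams under diagram symmetries, and your argument needs the same quotient: the injectivity step requires identifying the stabilizer of the closed alcove in the extended group (namely $P^\vee/Q^\vee$ acting as rotations of the affine diagram, plus $\out(L)$), not bare simple transitivity. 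The rectification step you flag at the end is indeed the other substantial ingredient, and your outline of it --- conjugacy of Cartan subalgebras in the fixed-point group, then alcove geometry --- is the right one.
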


As mentioned in the introduction, we do not distinguish
the $L$-involutions
$\si$ and $x \si x^{-1}$. In Theorem \ref{kd},
we also do not distinguish
a Kac diagram with its image under a diagram symmetry.
The Kac diagrams are listed in \cite[Figs.1-3]{c1}.
Actually Kac's theorem handles all $L$-automorphisms of
finite order, but we need it only for order 2 here.

If $\Pi$ is a simple system, we let $\Pi^1$ be the
union of $\Pi$ and its lowest root.

\begin{definition}
$\,$

{\rm
\noindent (a) $\,$
A {\it Vogan diagram on $\dy$}
(resp. {\it affine Vogan diagram on $\dyr$})
is a pair $(c,d)$,
where $d \in \inv(\dy)$ (resp. $d \in \inv(\dyr)$)
and $c$ is a circling on the vertices fixed by $d$.

\noindent (b) $\,$ For $r=1$,
we say that it represents $\si \in \inv(L)$
if there exists a
$\si$-stable Cartan subalgebra $H$ with a simple system
$\Pi$ such that the vertices of $\dy$ (resp. $\dyo$)
represent $\Pi$ (resp. $\Pi^1$),
and for all $\al \in \dy$ (resp. $\al \in \dyo$),
\begin{equation}
\begin{array}{l}
\si L_\al = L_{d \al} ,\\
d \al = \al \mbox{ and } \al \mbox{ is not circled}
\; \Longleftrightarrow \; \si = 1 \mbox{ on } L_\al ,\\
d \al = \al \mbox{ and } \al \mbox{ is circled}
\; \Longleftrightarrow \; \si = -1 \mbox{ on } L_\al .
\end{array}
\label{dew}
\end{equation}}
\label{dvd}
\end{definition}

In Definition \ref{dvd}(b),
we consider only $\dyo$.
The relations between affine Vogan diagrams on $\dyt$
and involutions are more delicate because
some $x \in \dyt$ satisfy $\dim L_x =2$.
We will discuss this issue in
Definition \ref{suim} and Proposition \ref{jass} later.

From now on through Section 4, we
set $r=1$ and focus on $\dyo = \dy \cup \{\vf\}$
to prove Theorem \ref{thm1}.
Given an affine Vogan diagram $(c,d)$ on $\dyo$,
we let $\co$ be a collection of $d$-orbits given by
\[
\begin{array}{rl}
\co = & \{\al \in \dyo \;;\; \al = d \al \mbox{ and }
\al \mbox{ is circled}\} \\
& \cup \{\{\al, d \al\} \subset \dyo \;;\;
\mbox{$\al$ and $d\al$ are adjacent vertices}\}.
\end{array}
\]
If $x = \{\al, d\al\} \in \co$ belongs to the latter type,
then $m_\al = m_{d \al}$, so we can define $m_x$ accordingly.

\begin{theorem}
{\rm \cite[VI-8]{kn}\cite[Def.1.2,Thm.1.3]{ch}}

\noindent {\rm (a)} $\,$
A Vogan diagram on $\dy$ always represents a unique $L$-involution.
Conversely, an $L$-involution is represented by a Vogan diagram
on $\dy$.

\noindent {\rm (b)} $\,$
An affine Vogan diagram on $\dyo$ represents an $L$-involution
if and only if $\sum_{x \in \co} m_x$ is even.
\label{vd}
\end{theorem}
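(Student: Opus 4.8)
The plan is to treat part (a) as the classical Vogan correspondence and to concentrate the real work on the affine parity condition in part (b).

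\textbf{Part (a).} For existence I would realize the datum $(c,d)$ on $\dy$ as an involution built from Chevalley generators. Fix a Cartan subalgebra $H$, a simple system $\Pi$ realizing $\dy$, and generators $\{E_\al\}_{\al \in \Pi}$. Since $d$ permutes $\Pi$ preserving the Cartan matrix, the isomorphism theorem produces an involutive automorphism $\tau_d$ of $L$ with $\tau_d H = H$ and $\tau_d E_\al = E_{d\al}$. I would then twist by an inner automorphism $\exp(\ad T)$, with $T \in H$ chosen $d$-invariant so that $e^{\al(T)} = -1$ exactly on the circled fixed vertices and $=1$ on the uncircled ones; setting $\si = \exp(\ad T)\,\tau_d$ gives $\si^2 = \exp(2\ad T) = 1$ and the behaviour (\ref{dew}). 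For the converse, given $\si \in \inv(L)$ I would conjugate it to commute with the conjugation of a compact real form $\fu$, so that $\si$ preserves $\fu$; a $\si$-stable maximal abelian $\ft \subset \fu$ then complexifies to a $\si$-stable Cartan $H$, and a $\si$-stable positive system produces $d$ (the induced permutation of simple roots) and $c$ (the sign of $\si$ on each fixed $L_\al$). Uniqueness up to conjugacy holds because the diagram fixes $\si$ on every $L_\al$, $\al \in \Pi$, and these generate $L$, the residual scaling freedom of the $E_\al$ being absorbed by inner automorphisms fixing $H$.

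\textbf{Part (b).} The key structural fact is that $\Pi^1 = \Pi \cup \{\vf\}$ carries the single linear relation $\sum_{\dyo} m_\al \al = 0$ with $m_\vf = 1$, so for the highest root $\psi = \sum_{\al \in \Pi} m_\al \al = -\vf$, the vector $E_\psi$ is an iterated bracket of the $E_\al$ in which each $\al \in \Pi$ occurs $m_\al$ times. Hence $E_\vf$ is \emph{not} an independent generator, and any $\si$ assembled from the finite datum $(c,d)|_{\dy}$ already determines its own action on $L_\vf$. The affine diagram represents an involution precisely when this forced action matches the value prescribed by $c$ and $d$ at $\vf$.

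\textbf{Necessity and sufficiency.} I would compute the forced action by applying the automorphism $\si$ to the iterated bracket for $E_\psi$. A circled fixed vertex contributes the factor $-1$ with multiplicity $m_\al$. An \emph{adjacent} swapped pair $\{\al, d\al\}$ produces, by antisymmetry of the bracket, $\si[E_\al, E_{d\al}] = -c_\al c_{d\al}[E_\al, E_{d\al}]$, i.e.\ an extra $-1$ on $L_{\al+d\al}$ with multiplicity $m_\al$; a non-adjacent pair has $[E_\al, E_{d\al}] = 0$ and contributes nothing. Weighting these signs along the relation $\sum m_\al \al = 0$, the compatibility reduces to $(-1)^{\sum_{x \in \co} m_x} = 1$, that is, $\sum_{x \in \co} m_x$ even. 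Sufficiency is the same computation read backwards: a prescription $\al \mapsto (-1)^{\ep_\al} \in \{\pm 1\}$ on $\dyo$ is realized by some $T \in H$ via $e^{\al(T)} = (-1)^{\ep_\al}$ if and only if it respects the relation, i.e.\ $\prod (-1)^{m_\al \ep_\al} = 1$, which is again exactly the even parity; the resulting $\si$ then satisfies (\ref{dew}) on all of $\dyo$.

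\textbf{Main obstacle.} The delicate point is the sign bookkeeping when $d$ acts nontrivially on $\dyo$, in particular when $d$ moves $\vf$ into $\dy$ so that $(c,d)$ does not restrict to a Vogan diagram on $\dy$. There the clean reduction ``$E_\vf$ lies in the finite subalgebra'' must be replaced by tracing the single relation through the $d$-action, and one must verify that adjacent swapped pairs, and only those, contribute the antisymmetry sign with the correct multiplicity $m_x$. Pinning down that these contributions are captured exactly by $\co$ is where the argument needs the most care.
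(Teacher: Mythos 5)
The paper does not prove Theorem \ref{vd}; it imports part (a) from \cite[VI-8]{kn} and part (b) from \cite[Def.1.2,Thm.1.3]{ch}, so your attempt has to stand on its own. Your part (a) is the standard argument (isomorphism theorem plus a twist $\exp(\ad T)\,\tau_d$, and a $\si$-stable compact form, Cartan subalgebra and positive system for the converse), and is fine in outline.

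Part (b), however, has a genuine gap --- precisely the one you defer to your ``main obstacle'' paragraph, and it is the only case the theorem is actually needed for in this paper: every diagram in Figure 1 has $d\vf\neq\vf$ for every admissible choice of $\vf$. Two concrete problems. First, when $d\vf\neq\vf$ your compatibility criterion is vacuous: the diagram prescribes no sign at $\vf$, and $\si L_\vf=L_{d\vf}$ is automatic once $\si$ normalizes $H$ and induces $d$ on $H^*$; the true obstruction is whether the free constants $c_\al$ in $\si E_\al=c_\al E_{d\al}$ can be chosen so that $\si^2=1$ holds together with the circling constraints, where for $\al\in\Pi$ with $d\al=\vf$ the identity $\si^2E_\al=E_\al$ becomes a forced structure-constant condition (note also that defining an automorphism from the affine $d$ requires first checking that $d\Pi=\Pi^1\setminus\{d\vf\}$ is again a simple system, which uses $m_{d\vf}=1$). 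Second, your sufficiency argument is not just incomplete but wrong as written for $d\neq 1$: realizing signs by $T\in H$ via $e^{\al(T)}=(-1)^{\ep_\al}$ and checking $\prod(-1)^{m_\al\ep_\al}=1$ only produces the inner case and ignores the signs that $\tau_d$ itself contributes on $d$-stable root spaces. Take the affine $A_2$ triangle with $d$ swapping $\al_1,\al_2$ and fixing $\vf$, uncircled: all $\ep_\al=0$, so your parity test is passed, yet $\co=\{\{\al_1,\al_2\}\}$ gives $\sum_{\co}m_x=1$ odd, and indeed no involution exists, since $\si E_1=cE_2$, $\si E_2=c^{-1}E_1$ forces $\si[E_1,E_2]=-[E_1,E_2]$, i.e.\ $\si=-1$ on $L_\vf$, contradicting the uncircled $\vf$. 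This antisymmetry sign is exactly what your own necessity bookkeeping records, so the two halves of your argument contradict each other. Closing the gap means proving the global sign formula $(-1)^{\sum_{\co}m_x}$ in general (including that non-adjacent swapped pairs contribute nothing to the structure-constant comparison along the iterated bracket for the highest root vector) and rerunning the existence construction with $\exp(\ad T)\,\tau_d$ rather than with $T$ alone.
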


The next proposition basically says that
affine Vogan diagrams on $\dyo$
are more general than Vogan diagrams on $\dy$.

\begin{proposition}
Given a Vogan diagram $(c,d)$ on $\dy$ which represents $\si$,
it has two extensions to affine Vogan diagrams on
$\dyo = \dy \cup \{\vf\}$, where $\vf$ is fixed by $d$.
One extension represents $\si$, and the other does not
represent an involution.
\label{exx}
\end{proposition}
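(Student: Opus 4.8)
The plan is to complete $d$ and the circling to the affine node $\vf$ in the two possible ways and then to invoke the parity criterion of Theorem \ref{vd}(b). First I would extend $d$ from $\dy$ to $\dyo$. Since $\vf$ is the lowest root, $-\vf$ is the unique highest root of $\De$, and it is fixed by every automorphism of the root system induced by a symmetry of $\dy$ (such an automorphism preserves the positive roots and the dominance order); hence $d(-\vf) = -\vf$, so $d\vf = \vf$. Thus $d$ extends uniquely to a diagram involution of $\dyo$ fixing $\vf$, and the only remaining freedom is whether the fixed vertex $\vf$ is circled. This yields exactly the two extensions asserted in the statement.

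Next I would evaluate $\sum_{x \in \co} m_x$ for each extension. Because $d\vf = \vf$, the vertex $\vf$ enters $\co$ only as a circled fixed vertex and produces no new $d$-orbit pair, so the two collections $\co$ differ by precisely the single element $\vf$, whose affine mark is $m_\vf = 1$ in $\sum_{\dyo} m_\al \al = 0$. Writing $S$ for the common contribution of the vertices of $\dy$, the uncircled extension gives $\sum_{x \in \co} m_x = S$ and the circled one gives $S + 1$; these parities are opposite, so Theorem \ref{vd}(b) shows that exactly one extension represents an $L$-involution and the other represents none. This disposes of the invalid extension.

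It remains to show that the valid extension represents $\si$ itself and not merely some involution, which is the only delicate point. By Definition \ref{dvd}(b) the valid extension represents some $\si' \in \inv(L)$, so there is a $\si'$-stable Cartan subalgebra with simple system $\Pi$ for which the relations (\ref{dew}) hold at every $\al \in \dyo$, in particular at every $\al \in \dy$. The restriction of the affine data to $\dy$ is precisely the original Vogan diagram $(c,d)$, so these relations exhibit $(c,d)$ as representing $\si'$ on $\dy$; since a Vogan diagram on $\dy$ represents a unique involution by Theorem \ref{vd}(a), we conclude $\si' = \si$. Equivalently, the forced action of $\si$ on $L_\vf$ has sign $(-1)^S$, so circling $\vf$ in accordance with this sign is exactly the even, hence valid, choice and makes (\ref{dew}) hold throughout $\dyo$. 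The main obstacle is thus not the parity bookkeeping but this identification of the represented involution, which the uniqueness in Theorem \ref{vd}(a) resolves cleanly.
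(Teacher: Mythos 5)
Your proof is correct, but it reverses the logical direction of the paper's argument at the decisive step. The paper proceeds constructively: after showing $d\vf = \vf$ (via $m_\al = m_{d\al}$ and linearity, rather than your highest-root argument --- both are sound), it observes that consequently $\si L_\vf = L_\vf$, so $\si$ already acts as $\pm 1$ on $L_\vf$, and it \emph{defines} the circling of $\vf$ to match that sign. With this choice (\ref{dew}) holds on all of $\dyo$ by construction, so this extension represents $\si$ with no appeal to any uniqueness statement; Theorem \ref{vd}(b) is then invoked only once, to rule out the opposite extension via the parity shift caused by $m_\vf = 1$. You instead invoke Theorem \ref{vd}(b) first, to single out which of the two extensions represents \emph{some} involution $\si'$, and only afterwards identify $\si' = \si$ by restricting the representing data ($\si'$-stable $H$, simple system, and the relations (\ref{dew})) from $\dyo$ to $\dy$ and appealing to the uniqueness clause of Theorem \ref{vd}(a). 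That restriction-plus-uniqueness step is legitimate --- the data exhibiting the valid extension as representing $\si'$ restricts exactly to data exhibiting the original $(c,d)$ as representing $\si'$ --- but it makes your proof depend on the uniqueness in Theorem \ref{vd}(a), which the paper's proof never uses. What your route buys is that you never verify (\ref{dew}) at the vertex $\vf$ directly; what the paper's route buys is self-containedness and the explicit description (which you state only as a closing ``equivalently'' remark) that in the valid extension $\vf$ is circled precisely when $\si = -1$ on $L_\vf$, a fact the paper reuses implicitly when reading involutions off Figure 1.
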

\begin{proof}
Let $(c,d)$ be a Vogan diagram on $\dy$ which represents $\si$.
There exist $H$ and $\Pi$ such that (\ref{dew}) holds.
Extend $d$ naturally to $\inv(\De)$
by $d(\al + \be)= d \al + d \be$.
Let $\vf = -\sum_\Pi m_\al \al$ be the lowest root of $\Pi$.
We have $m_\al = m_{d \al}$ for all $\al \in \Pi$, so
\[ d \vf = -\sum_\Pi m_\al d \al = -\sum_\Pi m_{d\al} d\al
= -\sum_\Pi m_\al \al = \vf .\]
It implies that $\si L_\vf = L_\vf$.
Extend $c$ to a circling $c_1$ on $\dyo$ by
\begin{equation}
\vf \mbox{ circled} \; \Longleftrightarrow \;
 \si = -1 \mbox{ on } L_\vf .
 \label{hhb}
 \end{equation}
Then (\ref{dew}) holds for $\vf$ as well,
so the affine Vogan diagram $(c_1,d)$
represents $\si$.

Another extension $(c_2,d)$
is obtained by treating $\vf$ oppositely from (\ref{hhb}).
We have $m_\vf =1$, so $(c_1,d)$ and $(c_2,d)$
have opposite parities of $\sum_{x \in \co} m_x$.
Since $(c_1,d)$ represents $\si$, by Theorem \ref{vd}(b),
$(c_2,d)$ does not represent an involution.
\end{proof}

In view of Proposition \ref{exx}, it remains only to consider
the affine Vogan diagrams on $\dyo$ which are
not extensions of Vogan diagrams.
Recall that two affine Vogan diagrams are said to be equivalent
if they represent the same involution.

\begin{proposition}
Every affine Vogan diagram on $\dyo$ which is not an extension of a
Vogan diagram is equivalent to a diagram in
the first column of Figure 1.
It represents an $L$-involution.
\label{licy}
\end{proposition}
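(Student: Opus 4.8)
The plan is to turn the hypothesis into a condition on the diagram involution $d$, cut the problem down to a finite list of cases by Kac's tables, and then pin down the represented involution through its invariant subalgebra. First I would observe that an affine Vogan diagram $(c,d)$ on $\dyo = \dy \cup \{\vf\}$ extends a Vogan diagram exactly when $d\vf = \vf$, since in that case $d$ restricts to an involution of $\dy$ and $(c,d)$ is recovered from $(c|_\dy, d|_\dy)$ as in Proposition \ref{exx}. Hence ``not an extension'' is the same as $d\vf \neq \vf$, i.e. $d$ moves the affine node. Reading the automorphism groups of the affine Dynkin diagrams off \cite[Ch.4]{ka90}, the only $\dyo$ carrying a diagram involution with $d\vf \neq \vf$ are those of types $A_n$, $B_n$, $C_n$, $D_n$, $E_6$, $E_7$; for the remaining types every diagram involution fixes $\vf$, so no non-extension diagram exists and there is nothing to prove. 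This leaves a finite list of pairs $(\dyo, d)$.

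For each such $d$ I would first select the admissible circlings. By Theorem \ref{vd}(b) the pair $(c,d)$ represents an $L$-involution precisely when $\sum_{x \in \co} m_x$ is even, so I keep exactly the circlings passing this parity test; these are the diagrams that represent an involution $\si$. For each surviving $(c,d)$ I would then compute the invariant subalgebra, using a dimension count for $L^\si$ (the theme of Section 3) to read its semisimple part $[L^\si, L^\si]$ and its center $\fz(L^\si)$ off the $d$-orbits and the circling of $\Pi^1$. Matching these data against the Kac diagrams through Theorem \ref{kd} --- the white vertices carrying the diagram of $[L^\si, L^\si]$ and the black vertices numbering $1 + \dim \fz(L^\si)$ --- identifies $\si$ and supplies the Kac diagram and invariant subalgebra recorded beside each entry of Figure 1.

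It then remains to normalize the circlings. Invoking the equivalence moves classified in \cite{ch2} --- diagram automorphisms of $\dyo$ together with the circling moves that preserve the represented involution --- I would show that every admissible $(c,d)$ is brought to a canonical representative with a minimal circling, and that these representatives are exactly the diagrams in the first column of Figure 1. As each of them passes the parity test, Theorem \ref{vd}(b) guarantees that it represents an $L$-involution, which finishes the argument.

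The step I expect to be the main obstacle is the computation of the invariant subalgebra when $d\vf \neq \vf$. In this ``inferior'' situation $\si$ stabilizes $\Pi^1$ but no simple system $\Pi$ of $\dy$, so the ordinary Vogan-diagram bookkeeping does not apply and one must extract $[L^\si, L^\si]$, and especially $\dim \fz(L^\si)$, from the $\si$-action on the $d$-orbits of $\Pi^1$ together with the linear dependence $\sum_{\dyo} m_\al \al = 0$. Getting the center dimension right --- equivalently, the correct number of black vertices in the matching Kac diagram --- is the delicate point, and it is exactly there that the dimension count plays its role.
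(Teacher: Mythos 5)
Your reduction of ``not an extension of a Vogan diagram'' to ``$d\vf \neq \vf$'' is where the argument breaks. Being an extension is a property of the abstract diagram $(c,d)$ on $\dyo$, not of a fixed labelling: $(c,d)$ is an extension as soon as there exists \emph{some} partition $\dyo = \dy \cup \{\vf\}$ with $\vf$ the lowest root of $\dy$ (i.e.\ some $d$-fixed vertex $\vf$ with $m_\vf = 1$) -- and the paper's proof states the negation accordingly: for \emph{every} such partition, $d\vf \neq \vf$. The difference is not cosmetic. On affine $E_6$, any involution $d$ swaps two of the three branches and fixes the third; it moves the standard affine node, yet the endpoint of the fixed branch has mark $1$ and is fixed by $d$, so re-designating that endpoint as $\vf$ exhibits the diagram as an extension of a Vogan diagram. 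This is exactly why Figure 1 contains no $E_6$ entries, whereas your list keeps $E_6$. The same misclassification occurs inside types $A$ and $D$: on the $4$-cycle $\dyo$ of $A_3$, the reflection fixing $\al_1,\al_3$ and swapping $\al_0 \leftrightarrow \al_2$ moves the standard affine node but is an extension (re-designate $\al_1$); on affine $D_n$, swapping one fork's endpoints while fixing the other fork's endpoints is likewise an extension. So your finite list of pairs $(\dyo,d)$ is strictly too large, and your final step -- matching the canonical representatives with ``exactly'' the first column of Figure 1 -- would fail for all these spurious cases.

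Two further points. The paper's proof of Proposition \ref{licy} needs none of the invariant-subalgebra or Kac-diagram computations you propose; those belong to Theorem \ref{thm1}(b) and occupy Sections 3--4. Instead, the finiteness comes cheaply from the equivalence moves of \cite{kn}\cite{ch2}: every $(c,d)$ is equivalent to a diagram with as few circled vertices as possible, such a minimal circling has at most two circled vertices, and in all two-circle cases one has $d\vf=\vf$, so genuine non-extensions carry at most one circled vertex; combined with the correct constraint on $d$ this is what exhausts Figure 1. Finally, by ``keeping exactly the circlings passing the parity test'' you prove the classification only for non-extension diagrams that already represent involutions, whereas the proposition asserts that \emph{every} non-extension diagram represents an $L$-involution. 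In the paper this follows because each diagram in the first column of Figure 1 has $\sum_{x \in \co} m_x$ even, hence represents an involution by Theorem \ref{vd}(b), and the equivalence of the first statement transfers this back to the original diagram; your filter sidesteps rather than establishes that claim.
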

\begin{proof}
The affine Vogan diagrams which are
not extensions of Vogan diagrams
carry $d \in \inv(\dyo)$ such that for any
partition $\dyo = \dy \cup \{\vf\}$
where $\vf$ is the lowest root of $\dy$,
we have $d \vf \neq \vf$.
For example this cannot occur on $E_6$
because for any $d \in \inv(\dyo)$,
we can always declare the vertex $\vf$ such that $d \vf = \vf$
and $m_\vf =1$ to be the lowest root of the remaining
vertices.

Given an affine Vogan diagram $(c,d)$ on $\dyo$,
we let it be equivalent to
another diagram $(c',d)$ with as few circled vertices
as possible.
Then $c'$ has at most two circled vertices
\cite[Thm.6.96]{kn}\cite[Table 1]{ch2}.
For all cases where $c'$ has two circled vertices,
we have $d \vf = \vf$, namely it is an extension
of a Vogan diagram on $\dy$.
Therefore, it suffices to consider the cases
with at most one circled vertex
and $d \vf \neq \vf$.
Such diagrams are exhausted by
the first column of Figure 1.
This proves the first statement
of the proposition.

Every affine Vogan diagram in the first column of Figure 1
satisfies $\sum_{x \in \co} m_x$ is even, so
by Theorem \ref{vd}(b), it represents
an involution.
This proves the second statement of the proposition.
\end{proof}


\newpage
\section{Dimensions of Invariant Subalgebras}
\setcounter{equation}{0}

We consider the affine Vogan diagrams on $\dyo$ which are
not extensions of Vogan diagrams.
By Proposition \ref{licy},
they are given by the first column of Figure 1,
and each of them represents an $L$-involution $\si$.
In order to find $\si$, we first compute $\dim L^\si$
for several cases in this section.

Let $(c,d)$ be an affine Vogan diagram on $\dyo$
which represents an involution $\si$.
By Definition \ref{dvd}, there exists a
$\si$-stable Cartan subalgebra $H$
with $\Pi^1 = \Pi \cup \{\vf\}$
such that (\ref{dew}) holds for all $\al \in \dyo$.
Let $\De \subset H^*$ be the root system,
and let $\De^\pm$ be the positive and negative roots
determined by $\Pi$.
Here $d$ permutes $\Pi^1$, and we extend it to
$d \in \inv(\De)$ by $d(\al+\be) = d \al + d \be$. Let
\begin{equation}
\De^d = \{ \al \in \De \;;\; \al = d \al\} .
 \label{ink}
\end{equation}
If $\al \in \De^d$, then $\si$ stabilizes $L_\al$,
so $\si$ acts as $\pm 1$ on $L_\al$. Let
\begin{equation}
\De_{\pm 1}^d = \{\al \in \De^d \;;\;
\si = \pm 1 \mbox{ on } L_\al\} .
\label{ksi}
\end{equation}
Note that $\al \in \De_{\pm 1}^d$
if and only if $-\al \in \De_{\pm 1}^d$
\cite[Prop.2.6(b)]{c1}.
We shall make use of the partition
$\De = \De_1^d \cup \De_{-1}^d \cup (\De \bsl \De^d)$
to compute $\dim L^\si$.

Let $\dim H = n$.
We index the vertices of $\dyo$ by
$\al_0, ..., \al_n$ as indicated in Figure 5.
Some affine Vogan diagrams in Figure 1 satisfy the following.
 \begin{equation}
\begin{tabular}{|c|c|c|} \hline
& & some elements \\
Figure 1 & $\De^d \cap \De^+$ & of $\De^+ \bsl \De^d$ \\ \hline
(b), (f), (i)
&  $m = \frac{n-1}{2}$,
$\{\sum_{j=m-i}^{m+i+1} \al_j \;;\; i=0, 1, ..., m-1\}$
& $\al_{m+1}, ..., \al_n$ \\
(g), (h), (j), (k)
& $m = \frac{n}{2}$,
$\{\sum_{j=m-i}^{m+i} \al_j \;;\; i=0, 1, ..., m-1\}$
& $\al_{m+1}, ..., \al_n$
\\ \hline
\end{tabular}
\label{taw}
\end{equation}

 For instance in Figure 1(b),
 $d \al_{m-i} = \al_{m+i+1}$. Hence
 $d(\al_{m-i} + ... + \al_{m+i+1}) = \al_{m+i+1} + ... + \al_{m-i}$,
 which implies that
 $\al_{m-i} + ... + \al_{m+i+1} \in \De^d \cap \De^+$.

Let $H = H^\si + H^{-\si}$ be the sum of $\pm 1$-eigenspaces.
Let $X_\al \in L_\al$ denote generic root vectors.
A desirable property is
$H^\si = \sum_{\De^d \cap \De^+} \bc [X_\al, X_{-\al}]$,
but it is not satisfied by Figure 1(a)
because $\De^d \cap \De^+ = \emptyset$.
So we disregard Figure 1(a) for the rest of this section
and work on the remaining diagrams.
Let $\langle \De^d \rangle \subset H^*$ denote the subspace
spanned by $\De^d$.

\begin{proposition}
Consider $(c,d)$ in the first column of Figure 1,
but not Figure 1(a).

\noindent {\rm (a)} \,
$H^\si = \sum_{\De^d \cap \De^+} \bc[X_\al, X_{-\al}]$.

\noindent {\rm (b)} \, $\dim L^\si = \dim \langle \De^d \rangle +
|\De_1^d| + \frac{1}{2} |\De \bsl \De^d|$.

\noindent {\rm (c)} \, If the roots in $\De^d \cap \De^+$
are linearly independent, then
$\dim L^\si = |\De_1^d| + \frac{1}{2} |\De|$.
\label{lmao}
\end{proposition}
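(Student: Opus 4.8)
The plan is to split $L = H \oplus \bigoplus_{\al \in \De} L_\al$ into the $\pm 1$-eigenspaces of $\si$ and count dimensions, reducing everything to the computation of $\dim H^\si$. Outside $H$ the contribution to $L^\si$ is read off directly from (\ref{dew}): for $\al \in \De^d$ the line $L_\al$ lies in $L^\si$ exactly when $\al \in \De_1^d$, contributing $|\De_1^d|$ in total; for a two-element orbit $\{\al, d\al\}$ with $d\al \neq \al$, the involution $\si$ interchanges $L_\al$ and $L_{d\al}$, so the $2$-dimensional space $L_\al \oplus L_{d\al}$ has a $1$-dimensional $+1$-eigenspace, contributing $\tfrac{1}{2}|\De \bsl \De^d|$. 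Hence
\[
\dim L^\si = \dim H^\si + |\De_1^d| + \tfrac{1}{2}|\De \bsl \De^d| ,
\]
and the whole proposition rests on identifying $H^\si$, which is the content of part (a).

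For part (a) I would first check the inclusion $\supseteq$. If $\al \in \De^d$, then by \cite[Prop.2.6(b)]{c1} the involution $\si$ acts by the same sign $\ep = \pm 1$ on $L_\al$ and on $L_{-\al}$, so $\si[X_\al, X_{-\al}] = \ep^2[X_\al, X_{-\al}] = [X_\al, X_{-\al}]$; since $[X_\al, X_{-\al}]$ is a nonzero multiple of the coroot of $\al$, this shows $\sum_{\De^d \cap \De^+} \bc[X_\al, X_{-\al}] \subseteq H^\si$. As $\De^d = (\De^d \cap \De^+) \cup -(\De^d \cap \De^+)$ and coroots of a root set span a space of the same dimension as the roots, the left-hand side has dimension $\dim \langle \De^d \rangle$. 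The reverse inclusion is thus equivalent to the bound $\dim H^\si \leq \dim \langle \De^d \rangle$. To get it, I would pass to the dual: from $\si L_\al = L_{d\al}$ one computes that the transpose $\si^*$ of $\si|_H$ acts on $H^*$ as $d$, and since $\si$ preserves the Killing form, $\si|_H$ is conjugate to $\si^* = d$ through the induced isomorphism $H^* \cong H$. Therefore $\dim H^\si = \dim (H^*)^d$, the dimension of the $+1$-eigenspace of $d$ on $H^*$, and it remains to prove $(H^*)^d = \langle \De^d \rangle$.

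Granting part (a), the rest is bookkeeping. Part (b) follows by substituting $\dim H^\si = \dim \langle \De^d \rangle$ into the displayed dimension formula. For part (c), linear independence of $\De^d \cap \De^+$ gives $\dim \langle \De^d \rangle = |\De^d \cap \De^+| = \tfrac{1}{2}|\De^d|$, the last equality because $\al \in \De^d \Leftrightarrow -\al \in \De^d$; then
\[
\dim \langle \De^d \rangle + \tfrac{1}{2}|\De \bsl \De^d| = \tfrac{1}{2}|\De^d| + \tfrac{1}{2}|\De \bsl \De^d| = \tfrac{1}{2}|\De| ,
\]
so (b) turns into (c).

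The main obstacle is the identity $(H^*)^d = \langle \De^d \rangle$, i.e. that every $d$-fixed vector is a combination of $d$-fixed roots. The inclusion $\langle \De^d \rangle \subseteq (H^*)^d$ is automatic, but the reverse can fail for a general linear involution of a root system, so it must be extracted from the specific diagrams; this is exactly why Figure 1(a), where $\De^d = \emptyset$, is excluded. For the remaining diagrams I would argue concretely: writing $P = \tfrac{1}{2}(1 + d)$ for the projection onto $(H^*)^d$, one has $(H^*)^d = \mathrm{span}\{\al + d\al \;;\; \al \in \De\}$, and a direct check on the nested consecutive-sum roots recorded in (\ref{taw}) shows that each $\al + d\al$ is either $0$ or $\pm$ a root of $\De^d$, hence lies in $\langle \De^d \rangle$, forcing equality of dimensions. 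I expect this case-by-case verification, rather than any conceptual step, to be where the real effort lies.
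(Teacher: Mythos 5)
Your decomposition and the counting outside of $H$ coincide with the paper's proof: parts (b) and (c) are handled identically in both arguments (the paper's (\ref{eli}) is exactly your eigenspace count on $L_\al \oplus L_{d\al}$, and (c) is the same bookkeeping). The genuine difference is part (a). The paper stays inside $H$: starting from $H^\si = \sum_{\De^d \cap \De^+} \bc[X_\al,X_{-\al}] + \sum_{\De \bsl \De^d} \bc([X_\al,X_{-\al}]+\si[X_\al,X_{-\al}])$, it forces $\dim H^\si$ down diagram by diagram, by exhibiting enough independent vectors $[X_\al,X_{-\al}]-\si[X_\al,X_{-\al}] \in H^{-\si}$ coming from the roots in the right column of (\ref{taw}). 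You instead dualize: since $\si L_\al = L_{d\al}$ forces the transpose of $\si|_H$ to be the linear extension of $d$ on $H^*$, you get $\dim H^\si = \dim (H^*)^d$, and part (a) becomes the purely combinatorial identity $(H^*)^d = \langle \De^d \rangle$. This is a legitimate and arguably cleaner reduction: it involves only the root system and $d$ (no root vectors or signs), it isolates exactly what must be checked case by case, and it explains transparently why Figure 1(a) is excluded --- there $\langle \De^d \rangle = 0$ while $(H^*)^d$ is $\frac{n-1}{2}$-dimensional, so the identity genuinely fails. What the paper's route buys is explicit $\pm 1$-eigenvectors in $H$, which mirror the computations it reuses in Section 4.

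Two repairs are needed in your final verification. First, the claim that each $\al + d\al$ is $0$ or $\pm$ a root of $\De^d$ is false: in Figure 1(b), where $d\al_{m-i} = \al_{m+i+1}$, one has $\al_{m-1} + d\al_{m-1} = \al_{m-1} + \al_{m+2}$, which is not a root of $A_n$ since those simple roots are not adjacent. What your argument actually needs, and what is true, is only that $\al + d\al \in \langle \De^d \rangle$: writing $\be_i = \sum_{j=m-i}^{m+i+1} \al_j \in \De^d$ from (\ref{taw}), one has $\al_{m-i} + d\al_{m-i} = \be_i - \be_{i-1}$, a difference of two roots of $\De^d$. (It also suffices to run the check over $\al \in \Pi$ only, since $\Pi$ spans $H^*$.) Second, (\ref{taw}) covers only Figures 1(b) and 1(f)--(k), so your check must also treat 1(c), 1(d), 1(e), 1(l), 1(m) and the exceptional diagrams 1(n)--(p); these are in fact easier --- e.g.\ for Figure 1(c), $d$ sends $e_1 \mapsto -e_1$ and fixes $e_2, \dots, e_n$, so $(H^*)^d = \langle \De^d \rangle$ is the span of $e_2,\dots,e_n$ on both sides. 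With these corrections your argument is complete and stands as a valid alternative to the paper's proof of part (a).
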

\begin{proof}
We first prove part (a).
Since $H$ is generated by
$\{[X_\al, X_{-\al}] \;;\; \al \in \De^+\}$,
\begin{equation}
H^\si = \sum_{\De^d \cap \De^+} \bc[X_\al, X_{-\al}] +
\sum_{\De \bsl \De^d} \bc([X_\al, X_{-\al}] + \si [X_\al, X_{-\al}]).
\label{eone}
\end{equation}

We check the diagrams case-by-case and see that (\ref{eone})
leads to part (a).
For example for Figure 1(b),
the middle column of (\ref{taw}) says that $\dim H^\si \geq m$,
and the right column of (\ref{taw}) says that $\dim H^{-\si} \geq n-m$
because each $\al \in \De^+ \bsl \De^d$ leads to
$[X_\al, X_{-\al}] - \si [X_\al, X_{-\al}] \in H^{-\si}$.
Hence $\dim H^\si = m$ and
$H^\si = \sum_{\De^d \cap \De^+} \bc[X_\al, X_{-\al}]$.

Similarly for Figure 1(c),
we have $\dim H^\si \geq n-1$ because it contains
$[X_\al, X_{-\al}]$ for $\al \in \{\al_2, ..., \al_n\}$.
But $H^{-\si} \neq 0$,
hence $\dim H^\si = n-1$ and
$H^\si = \sum_{\De^d \cap \De^+} \bc[X_\al, X_{-\al}]$.
The same happens for all diagrams other than
 Figure 1(a). Part (a) follows.

 Next we prove part (b).
 Since $\si$ stabilizes $H$ and permutes its root spaces, we have
$L^\si = H^\si + (\sum_\De L_\al)^\si$. Together with part (a),
we have
\begin{equation}
\dim L^\si = \dim \langle \De^d \rangle + \dim (\sum_\De L_\al)^\si.
\label{eem}
\end{equation}

To study $\dim (\sum_\De L_\al)^\si$, we consider
each component of $\De = \De_1^d \cup \De_{-1}^d \cup (\De \bsl \De^d)$.
We have
\begin{equation}
\begin{array}{l}
\al \in \De_1^d \Longrightarrow L_\al \subset L^\si , \\
\al \in \De_{-1}^d \Longrightarrow L_\al \cap L^\si = 0 ,\\
\al \in \De \bsl \De^d \Longrightarrow
\dim ((L_\al + \si L_\al) \cap L^\si) = 1 .
\end{array}
\label{eli}
\end{equation}
The last condition of (\ref{eli}) is due to the fact that if
$\al \in \De \bsl \De^d$, then
$X_\al \pm \si X_\al \in L^{\pm \si}$. By (\ref{eli}),
$\dim (\sum_\De L_\al)^\si = |\De_1^d| + \frac{1}{2} |\De \bsl \De^d|$.
Together with (\ref{eem}), this proves part (b).

Finally we prove part (c).
If the roots in $\De^d \cap \De^+$ are linearly independent, then
\begin{equation}
\dim \langle \De^d \rangle = \frac{1}{2} |\De^d| .
\label{een}
\end{equation}
It follows that
\[
\begin{array}{rll}
\dim L^\si &
= \dim \langle \De^d \rangle +
|\De_1^d| + \frac{1}{2} |\De \bsl \De^d| & \mbox{by part (b)} \\
& = \frac{1}{2} |\De^d| +
|\De_1^d| + \frac{1}{2} |\De \bsl \De^d| & \mbox{by (\ref{een})} \\
& = |\De_1^d| + \frac{1}{2} |\De| . &
\end{array}
\]
This proves part (c).
\end{proof}

We shall use Proposition \ref{lmao} to compute $\dim L^\si$ for
several diagrams.
By observing the circlings in Figure 1,
the middle column of (\ref{taw}) leads to the following.

\begin{equation}
\begin{tabular}{|c|c|} \hline
Figure 1 & $|\De_1^d|$ \\ \hline
(b), (f), (g), (i), (j)
& $0$ \\
(h), (k)
& $n$
\\ \hline
\end{tabular}
\label{taz}
\end{equation}

Let us discuss Figures 1(b), 1(g) and 1(h)
as examples to illustrate (\ref{taz}).
We again index the vertices by Figure 5.
For vertex $i$, its root vector is denoted by $X_i$.

In Figure 1(b),
we have $\si X_m = X_{m+1}$ for some root vectors $X_m, X_{m+1}$. Then
\[ \si[X_m, X_{m+1}] = [\si X_m, \si X_{m+1}]
= [X_{m+1}, X_m] = - [X_m, X_{m+1}] ,\]
so $\al_m + \al_{m+1} \in \De_{-1}^d$.
Similarly, all the elements of $\De^d \cap \De^+$ in (\ref{taw}) belong to
$\De_{-1}^d$. Hence $|\De_1^d| = 0$ for Figure 1(b) in (\ref{taz}).

In Figure 1(g), let $\al_m$ be the circled vertex. Then
\begin{equation}
\si X_m = - X_m \;,\;
\begin{array}{l}
\si [[X_{m-1}, X_m], X_{m+1}] = [[\si X_{m-1}, \si X_m], \si X_{m+1}] \\
= [[X_{m+1}, - X_m], X_{m-1}] = - [[X_{m-1}, X_m], X_{m+1}] ,
\end{array}
\label{qoo}
\end{equation}
hence $\al_m, \al_{m-1} + \al_m + \al_{m+1} \in \De_{-1}^d$.
By similar computation, all the elements of $\De^d \cap \De^+$
in (\ref{taw}) belong to
$\De_{-1}^d$. Hence $|\De_1^d| = 0$ for Figure 1(g) in (\ref{taz}).

In Figure 1(h), let $\al_m$ be the uncircled vertex. Then (\ref{qoo}) becomes
\[
\si X_m = X_m \;,\;
\begin{array}{l}
\si [[X_{m-1}, X_m], X_{m+1}] = [[\si X_{m-1}, \si X_m], \si X_{m+1}] \\
= [[X_{m+1}, X_m], X_{m-1}] = [[X_{m-1}, X_m], X_{m+1}] .
\end{array}
\]
Hence $\al_m, \al_{m-1} + \al_m + \al_{m+1} \in \De_1^d$.
By similar computation, all the elements of $\De^d \cap \De^+$ in (\ref{taw}) belong to
$\De_1^d$. Doubling the number to include negative roots,
we have $|\De_1^d| = 2m = n$ for Figure 1(h) in (\ref{taz}).
The rest of (\ref{taz}) are computed similarly.

For references here and later, we provide the dimensions of
complex classical Lie algebras, and their numbers of roots $|\De|$.
See for example \cite[Appendix C]{kn}.
\begin{equation}
\begin{tabular}{|c|c|c|} \hline
$L$ & $\dim L$ & $|\De|$ \\ \hline
$A_n$ & $n^2+2n$ & $n^2 + n$ \\
$B_n$ & $2n^2+n$ & $2n^2$ \\
$C_n$ & $2n^2+n$ & $2n^2$ \\
$D_n$ & $2n^2-n$ & $2n^2-2n$
\\ \hline
\end{tabular}
\label{tas}
\end{equation}

In (\ref{taw}), the roots in $\De^d \cap \De^+$
are linearly independent.
By Proposition \ref{lmao}(c), (\ref{taz}) and (\ref{tas}),
\begin{equation}
\begin{tabular}{|c|c|} \hline
Figure 1 & $\dim L^\si$ \\ \hline
(b) & $\frac{1}{2}(n^2+n)$ \\
(f), (g), (k) & $n^2$ \\
(h) & $n^2 + n$ \\
(i), (j) & $n^2 - n$
\\ \hline
\end{tabular}
\label{pcc}
\end{equation}

For example in Figure 1(b),
$\dim L^\si = \frac{1}{2}|\De(A_n)| = \frac{1}{2}(n^2+n)$.
And in Figure 1(h),
$\dim L^\si = |\KK| + \frac{1}{2} |\De(C_n)| = n + n^2$.

Our motivation for computing $\dim L^\si$ is to find $\si$.
It is known that $L^\si$ determines $\si$
\cite[Ch.X-6,Thm.6.2]{he};
namely if $L^\si \cong L^\tau$,
then $\si$ and $\tau$ are conjugated by $\aut(L)$.
But here we merely have $\dim L^\si$ instead of $L^\si$,
and we will apply
\'{E}. Cartan's observation that
``$\dim L^\si$ determines
$\si$ in most cases''.

Let $\fg$ be a real form of $L$,
namely $\fg$ is a real subalgebra such that $\fg \otimes \bc = L$.
Let $\fg =\fk + \fp$ be a Cartan decomposition,
where $\fk$ is a maximal compact subalgebra.
Cartan defines the character of
$\fg$ by \cite[p.263]{ca}
\begin{equation}
\de = \dim \fp - \dim \fk .
\label{chara}
\end{equation}
He observes that $\de$ determines $\fg$
in all exceptional cases, as well as most classical cases.
In fact he uses it to denote the exceptional real forms,
for instance $\fe_{6(-26)}$ is the real form of $E_6$
with character $\de = -26$
\cite[Ch.X-6,Table V]{he}.
Given $\si \in \inv(L)$, it corresponds to
a real form $\fg = \fk + \fp$ by
\[ \si(\fg) = \fg \,,\, L^\si \cap \fg = \fk \,,\,
L^{-\si} \cap \fg = \fp , \]
namely $\si$ is a Cartan involution of $\fg$.
Then $\de + 2 \dim L^\si = \dim L$,
so we can discuss in terms of $\dim L^\si$ instead of $\de$.

\begin{theorem}
Let $L$ be a complex simple Lie algebra,
and $\si, \tau \in \inv(L)$.

\noindent {\rm (a)} $\;$ If $L = A_n$ and
$\dim L^\si = \frac{1}{2}(n^2 +n)$, then
$L^\si = \fs \fo(n+1, \bc)$.

\noindent {\rm (b)} $\;$ If $L = D_n$ and
$\dim L^\si = n^2 - n$, then $L^\si = \fs \fo(n, \bc)^2$.

\noindent {\rm (c)} $\;$ If $L = D_n$, $L^\si \not\cong L^\tau$
and $\dim L^\si = \dim L^\tau$, then $n>4$ is a perfect square and
$\{L^\si, L^\tau\} = \{A_{n-1} + \bc,
\fs \fo(n + \sqrt{n}, \bc) + \fs \fo(n - \sqrt{n}, \bc)\}$.

\noindent {\rm (d)} $\;$ If $L$ is not of type $A$ or $D$, and
$\dim L^\si = \dim L^\tau$, then $L^\si \cong L^\tau$.
\label{cahe}
\end{theorem}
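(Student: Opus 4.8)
The plan is to reduce all four parts to the classification of $L$-involutions together with a finite comparison of the integers $\dim L^\sigma$. By \cite{he} the isomorphism type of $L^\sigma$ already determines $\sigma$ up to $\aut(L)$, so each involution of a simple $L$ is recorded by its real form $\fg = \fk + \fp$, with $L^\sigma = \fk^{\mathbb{C}}$ the complexified maximal compact subalgebra. For each simple type I would list these finitely many $L^\sigma$ together with their dimensions, written as explicit quadratics in the discrete parameter labelling the real form, and then read off where equal dimensions fail to force isomorphic $L^\sigma$.

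For type $A_n$ (part (a)) the possibilities for $L^\sigma$ are $\mathfrak{s}(\mathfrak{gl}(p,\mathbb{C}) + \mathfrak{gl}(q,\mathbb{C}))$ with $p+q = n+1$ (dimension $p^2+q^2-1$), the algebra $\mathfrak{so}(n+1,\mathbb{C})$ (dimension $\frac{1}{2}(n^2+n)$), and, when $n+1$ is even, $\mathfrak{sp}(\frac{n+1}{2},\mathbb{C})$. A short comparison of these quadratics shows that the $\mathfrak{su}$- and $\mathfrak{sp}$-invariants strictly exceed, respectively differ from, $\frac12(n^2+n)$ for $n>1$, so up to isomorphism this value is attained only by $\mathfrak{so}(n+1,\mathbb{C})$, giving (a). For types $B_n$ and $C_n$ the invariant subalgebras form a single family $\mathfrak{so}(p,\mathbb{C})+\mathfrak{so}(q,\mathbb{C})$, respectively $\mathfrak{sp}(p,\mathbb{C})+\mathfrak{sp}(q,\mathbb{C})$, whose dimension is a parabola in $p$ symmetric about the midpoint, so distinct unordered pairs $\{p,q\}$ give distinct dimensions; in the $C$ case the split outlier $\mathfrak{gl}(n,\mathbb{C})$ (dimension $n^2$) cannot coincide with a family member, since that would force $pq = \frac{n(n+1)}{4}$, impossible because $pq \le n^2/4$. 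Finally, for $G_2,F_4,E_6,E_7,E_8$ I would list the three to five involutions of each and observe that the dimensions $\dim L^\sigma$ are pairwise distinct. Together these settle (d).

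The heart of the matter, and the main obstacle, is type $D_n$ (parts (b) and (c)), where $L^\sigma$ is either $\mathfrak{so}(p,\mathbb{C})+\mathfrak{so}(q,\mathbb{C})$ with $p+q=2n$ (dimension $\frac12(p^2+q^2-2n)$) or $\mathfrak{gl}(n,\mathbb{C}) = A_{n-1}+\mathbb{C}$ (dimension $n^2$). The parabola is minimized at $p=q=n$ with value $n^2-n$, which gives (b). For (c) I would set the parabola equal to $n^2$ and solve $(p-n)^2 = n$, which forces $n$ to be a perfect square and $p = n\pm\sqrt{n}$, producing the asserted pair $\{A_{n-1}+\mathbb{C},\; \mathfrak{so}(n+\sqrt{n},\mathbb{C})+\mathfrak{so}(n-\sqrt{n},\mathbb{C})\}$.

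The delicate point I expect to be most error-prone is the exclusion $n>4$: having matched dimensions, I must still check that the two algebras are genuinely non-isomorphic. This fails exactly at $n=4$, where $p=2$ and the low-dimensional isomorphism $\mathfrak{so}(6,\mathbb{C})\cong\mathfrak{sl}(4,\mathbb{C})$, together with the abelian $\mathfrak{so}(2,\mathbb{C})$, collapse $\mathfrak{so}(2,\mathbb{C})+\mathfrak{so}(6,\mathbb{C}) \cong \mathbb{C}+A_3 \cong A_3+\mathbb{C} = A_{n-1}+\mathbb{C}$. For $n>4$ one has $p=n-\sqrt{n}\ge 6$, so both factors are simple and $L^\sigma$ is semisimple, hence not isomorphic to the non-semisimple $A_{n-1}+\mathbb{C}$. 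I would also confirm that the remaining low-dimensional coincidences ($\mathfrak{so}(3)\cong\mathfrak{sl}(2)$, $\mathfrak{so}(4)\cong\mathfrak{sl}(2)^2$, $\mathfrak{so}(5)\cong\mathfrak{sp}(4)$) never manufacture an additional equal-dimension, non-isomorphic pair, which holds because distinct unordered $\{p,q\}$ already yield distinct dimensions; this bookkeeping is where care is required.
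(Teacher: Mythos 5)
Your proposal is correct, but it reaches parts (a), (b), (d) by a genuinely different route than the paper. For (a) and (b) the paper does not enumerate and compare all invariant subalgebras; it invokes \'{E}. Cartan's observation (via Helgason) that the split real form has strictly larger character $\de = \dim \fp - \dim \fk$ than every other real form, and since $\de + 2 \dim L^\si = \dim L$, the split form is the unique involution class minimizing $\dim L^\si$; as the split forms of $A_n$ and $D_n$ are $\fs \fl(n+1,\br)$ and $\fs \fo(n,n)$, with $L^\si = \fs \fo(n+1,\bc)$ and $\fs \fo(n,\bc)^2$ of dimensions $\frac{1}{2}(n^2+n)$ and $n^2-n$, parts (a) and (b) follow with no case analysis. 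For (d) the paper again simply cites Helgason: outside types $A$ and $D$ the character uniquely determines the real form, so $\dim L^\si$ determines $\si$. Your replacement --- explicit quadratic comparisons for $B_n$ and $C_n$ (your contradiction $pq = n(n+1)/4 > n^2/4 \geq pq$ is correct) plus a finite dimension check over the exceptional types --- is more elementary and self-contained, at the cost of leaning on the full classification lists of involutions; the paper's route gets (a), (b), (d) almost for free from cited structure theory but is less explicit. Part (c) is essentially identical in both treatments: the same equation $(n-p)^2 = n$ forcing $n$ to be a perfect square with $p = n \pm \sqrt{n}$, the same $n=4$ coincidence $A_3 + \bc \cong \fs \fo(6,\bc) + \fs \fo(2,\bc)$, and the same parabola argument that distinct unordered pairs $\{p, 2n-p\}$ give distinct dimensions. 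Your added verification that for $n>4$ the two candidates are genuinely non-isomorphic (both factors simple, hence semisimple versus an algebra with one-dimensional center) is a worthwhile supplement that the paper leaves implicit.
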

\begin{proof}
If $L$ is not of type $A$ or $D$, then its real forms are
uniquely determined by their characters \cite[Ch.X-6,p.517]{he},
so $\si$ is determined by $\dim L^\si$. This proves part (d).

We say that $\fg$ is split or normal if it has a Cartan
subalgebra contained in $\fp$.
The character of a split form is strictly larger than the characters
of other real forms of $L$ \cite[Ch.X-6,p.517]{he}.
So if the split form corresponds to $\si$, then
$\dim L^\si < \dim L^\tau$ for all $\tau \in \inv(L)$
such that $L^\si \not\cong L^\tau$.

The split form of $A_n$ is $\fs \fl(n+1, \br)$,
and it corresponds to $L^\si = \fs \fo(n+1, \bc)$.
Since $\dim \fs \fo(n+1, \bc) = \frac{1}{2}(n^2 +n)$,
this proves part (a).

The split form of $D_n$ is $\fs \fo(n,n)$,
and it corresponds to $L^\si = \fs \fo(n, \bc)^2$.
Since $\dim \fs \fo(n,\bc)^2 = n^2 -n$, this proves part (b).

Finally we prove part (c). Let $L = D_n$.
The Kac diagrams in \cite[Figs.1-3]{c1} show that possible $L^\si$ are
\begin{equation}
\begin{array}{cl}
\mbox{(a)}
& L^\si = A_{n-1} + \bc \;,\;
\dim L^\si = n^2 ,\\
\mbox{(b)}
& L^\si = \fs \fo(p, \bc) + \fs \fo(2n-p, \bc) \;,\;
\dim L^\si
= p^2 + 2n^2 -2np - n .
\end{array}
\label{cox}
\end{equation}

If (\ref{cox})(a) and (\ref{cox})(b) have equal
$\dim L^\si$, then $n^2 = p^2 + 2n^2 -2np - n$.
Hence $(n-p)^2 = n$, namely
$n$ is a perfect square and $p = n \pm \sqrt{n}$.
This leads to the pair
$\{A_{n-1} + \bc,
\fs \fo(n + \sqrt{n}, \bc) + \fs \fo(n - \sqrt{n}, \bc)\}$
in part (c). For $n=4$,
$A_3 + \bc$ and
$\fs \fo(6, \bc) + \fs \fo(2, \bc)$
are isomorphic.

We also compare (\ref{cox})(b) with itself for various $p$ and $q$.
Suppose that $L^\si = \fs \fo(p, \bc) + \fs \fo(2n-p, \bc)$ and
$L^\tau = \fs \fo(q, \bc) + \fs \fo(2n-q, \bc)$ have equal dimension.
Then $p^2-2np=q^2-2nq$. This implies that $q \in \{2n-p,p\}$,
so $L^\si \cong L^\tau$.
We conclude that for $L = D_n$, the only case of $L^\si \not\cong L^\tau$
and $\dim L^\si = \dim L^\tau$ is listed in part (c).
This completes the proof of Theorem \ref{cahe}.\end{proof}


\newpage
\section{Invariant Subalgebras}
\setcounter{equation}{0}

Proposition \ref{licy} says that every affine Vogan diagram
in Figure 1 represents an $L$-involution $\si$.
In this section, we find $\si$
on a case-by-case basis. We show that
it is given by the Kac diagram,
or equivalently the invariant subalgebra $L^\si$ in Figure 1.
We then complete the proof of Theorem \ref{thm1}.

In Figure 1, we assume that $L$ has rank $n$,
so the affine Vogan diagrams have $n+1$ vertices.
Their vertices are indexed $0, 1, ..., n$ by Figure 5.

\sk

\noindent \underline{Figure 1(a)}

Here $n$ is odd. Let $m = \frac{n-1}{2}$.
Write an $(n+1) \times (n+1)$ matrix as
$\left(
\begin{array}{cc}
s & t \\
u & v
\end{array} \right)$,
where $s,t,u,v$ are $(m+1) \times (m+1)$ matrices.
Then the affine Vogan diagram in Figure 1(a) represents
the involution
$\si \left(
\begin{array}{cc}
s & t \\
u & v
\end{array} \right) =
\left(
\begin{array}{cc}
v & u \\
t & s
\end{array} \right)$, so
\[ L^\si = \left\{ \left(
\begin{array}{cc}
s & t \\
t & s
\end{array} \right)
\in \fs \fl (n+1 , \bc) \right\}
= A_m^2 + \bc .\]
This solves Figure 1(a).

\sk

For the rest of Figure 1,
we let the affine Vogan diagram $(c,d)$ on $\dyo$
represent $\si$, and
let the Kac diagram represent $\tau$.
We apply Theorem \ref{cahe} to find $L^\si$.
Recall that $\De^d$ and $\De_{\pm 1}^d$
are defined in (\ref{ink}) and (\ref{ksi}).

\sk

\noindent \underline{Figure 1(b)}

By (\ref{pcc}),
$\dim L^\si = \frac{1}{2}(n^2 + n)$.
By Theorem \ref{cahe}(a), $L^\si = \fs \fo(n+1, \bc)$.
This solves Figure 1(b).

\sk

\noindent \underline{Figure 1(c)}

Express the roots as $\pm e_i \pm e_j$ and $\pm e_i$
\cite[Appendix C]{kn}.
Let the vertices of $\dyo$ represent
$-e_1-e_2, e_1 - e_2, ..., e_{n-1} - e_n, e_n$.
So $d$ switches $e_1$ with $-e_1$,
and fixes other $e_i$. Therefore,
\[ \De \bsl \De^d =
\{\pm e_1 \pm e_i \;;\; i= 2, ..., n\}
\cup \{\pm e_1\}. \]
Hence $|\De \bsl \De^d| = 4(n-1) + 2 = 4n-2$.
We also have $\dim \langle \De^d \rangle = n-1$
because $\langle \De^d \rangle$ is spanned by
$e_2, ..., e_n$. So by Proposition \ref{lmao}(b),
\begin{equation}
\begin{array}{rl}
\dim L^\si & = \dim \langle \De^d \rangle +
|\De_1^d| + \frac{1}{2} |\De \bsl \De^d| \\
& = n-1 + |\De_1^d| + \frac{1}{2}(4n-2) \\
& = 3n-2 + |\De_1^d| .
\end{array}
\label{asa}
\end{equation}

Vertex $e_n$ is circled, so $e_n \in \De_{-1}^d$
and $\De_1^d = \{\pm e_i \pm e_j \;;\;
1 < i < j \leq n\}$. Therefore,
\begin{equation}
|\De_1^d| = 2(n-1)(n-2)= 2n^2 - 6n + 4 .
\label{ati}
\end{equation}

By (\ref{asa}) and (\ref{ati}),
\begin{equation}
\dim L^\si = (3n-2) + (2n^2 -6n +4)
= 2n^2 - 3n + 2 .
\label{aem}
\end{equation}

For the Kac diagram of Figure 1(c),
\begin{equation}
\dim L^\tau = \dim(B_{n-1} + \bc) = 2(n-1)^2 + (n-1) + 1
= 2n^2 - 3n + 2.
\label{ali}
\end{equation}
By Theorem \ref{cahe}(d), (\ref{aem}) and (\ref{ali}),
$L^\si = L^\tau = B_{n-1} + \bc$. This solves Figure 1(c).

\sk

\noindent \underline{Figures 1(d)}

In the affine Vogan diagram,
the root $e_m - e_{m+1}$ is circled. So
\[ \De_1^d = \{\pm e_i \pm e_j \;;\; 1 < i < j \leq m
\mbox{ or } m+1 \leq i < j \leq n\}
\cup \{ \pm e_{m+1} ,..., \pm e_n\} .\]
Hence
\begin{equation}
\begin{array}{rl}
|\De_1^d| & = 2(m-1)(m-2) + 2(n-m)(n-m-1) + 2(n-m) \\
& = 4m^2 - 6m +4 + 2n^2 -4nm.
\end{array}
\label{duu}
\end{equation}
By (\ref{asa}) and (\ref{duu}),
\begin{equation}
\dim L^\si = (3n-2) + (4m^2 -6m +4 +2n^2 -4nm).
\label{tii}
\end{equation}

The Kac diagram in Figure 1(d) satisfies
\[
\begin{array}{rl}
\dim L^\tau & = \dim D_{n-m+1} + \dim B_{m-1}\\
& = 2(n-m+1)^2 -(n-m+1) + 2(m-1)^2 + (m-1) ,
\end{array}
\]
which equals (\ref{tii}).
By Theorem \ref{cahe}(d),
$L^\si = L^\tau = D_{n-m+1} + B_{m-1}$.
This solves Figure 1(d).

\sk

\noindent \underline{Figures 1(e)}

The affine Vogan diagram satisfies
\[ \De_1^d = \{\pm e_i \pm e_j \;;\;
1 < i < j \leq n \} \cup
\{\pm e_2, ..., \pm e_n\} .\]
Hence
\begin{equation}
 |\De_1^d| = 2(n-1)(n-2) + 2(n-1)
= 2n^2 - 4n +2 .
\label{zoom}
\end{equation}
By (\ref{asa}) and (\ref{zoom}),
\begin{equation}
\dim L^\si = (3n-2) + (2n^2 - 4n +2)
= 2n^2 -n.
\label{tip}
\end{equation}
The Kac diagram satisfies
$\dim L^\tau = \dim D_n = 2n^2 -n$,
which equals (\ref{tip}).
By Theorem \ref{cahe}(d), $L^\si = L^\tau = D_n$.
This solves Figure 1(e).

\sk

\noindent \underline{Figures 1(f), 1(g)}

By (\ref{pcc}),
$\dim L^\si = n^2$.
The Kac diagram gives $L^\tau = A_{n-1} + \bc$,
so $\dim L^\tau = n^2$.
By Theorem \ref{cahe}(d), $L^\si = L^\tau = A_{n-1} + \bc$.
This solves Figures 1(f) and 1(g).

\sk

\noindent \underline{Figure 1(h)}

By (\ref{pcc}),
$\dim L^\si = n^2 + n$.
The Kac diagram
with black vertex $m = \frac{n}{2}$ gives $L^\tau = C_m^2$, so
$\dim L^\tau = 2 (2m^2 + m) = n^2 + n$.
 By Theorem \ref{cahe}(d), $L^\si = L^\tau = C_m^2$.
This solves Figure 1(h).

\sk

\noindent \underline{Figures 1(i), 1(j)}

By (\ref{pcc}), $\dim L^\si = n^2-n$.
By Theorem \ref{cahe}(b), $L^\si = \fs \fo(n, \bc)^2$.
This solves Figures 1(i) and 1(j).

\sk

\noindent \underline{Figure 1(k)}

Here $n$ is even.
By (\ref{pcc}) and Figure 1(k),
\begin{equation}
\dim D_n^\si = n^2 \,,\, D_n^\tau = A_{n-1} + \bc \,,\,
\dim D_n^\tau = n^2 .
\label{cow}
\end{equation}

If $n$ is not a perfect square larger than 4, then
by Theorem \ref{cahe}(c) and (\ref{cow}),
$D_n^\si = A_{n-1} + \bc$ and
we have solved Figure 1(k).

Suppose that
$n \in \{4^2, 6^2, 8^2, ...\}$.
By Theorem \ref{cahe}(c) and (\ref{cow}),
\begin{equation}
D_n^\si \in \{A_{n-1} + \bc ,
\fs \fo(n + \sqrt{n}, \bc) + \fs \fo(n - \sqrt{n}, \bc)\} .
\label{perf}
\end{equation}
Consider the similar affine Vogan diagram of rank $n+2$,
namely there are two more white vertices.
It represents an involution on $D_{n+2}$, still denoted by $\si$.
We have $D_{n+2}^\si = A_{n+1} + \bc$
because $n+2$ is not a perfect square.
The natural inclusion
$D_n \hookrightarrow D_{n+2}$ intertwines with $\si$, hence
\begin{equation}
 D_n^\si \hookrightarrow D_{n+2}^\si = A_{n+1} + \bc .
 \label{lic}
 \end{equation}
By (\ref{perf}) and (\ref{lic}), we have
$D_n^\si = A_{n-1}+ \bc$ because
$\fs \fo(n + \sqrt{n}, \bc) + \fs \fo(n - \sqrt{n}, \bc)$
cannot be imbedded into $A_{n+1}$.
This solves Figure 1(k) for all $n$.

\sk

\noindent \underline{Figure 1(l)}

Express the roots as $\pm e_i \pm e_j$ \cite[Appendix C]{kn}.
Let the vertices of $\dyo$ represent
$-e_1 -e_2, e_1 - e_2, ..., e_{n-1}- e_n, e_{n-1} +e_n$.
So $d$ switches $e_1$ with $-e_1$,
switches $e_n$ with $-e_n$, and fixes other $e_i$. Hence
\[ \De^d = \{\pm e_i \pm e_j \;;\; 1 < i < j < n\} ,\]
and so $|\De^d| = 2(n-2)(n-3)$. Therefore,
\begin{equation}
|\De \bsl \De^d| = (2n^2 - 2n) - 2(n-2)(n-3) = 8n-12.
\label{bdu}
\end{equation}

A simple system of $\De^d$ is
$e_2-e_3, ..., e_{n-2}-e_{n-1},e_{n-2}+e_{n-1}$, so
$\dim \langle \De^d \rangle = n-2$.
Together with Proposition \ref{lmao}(b) and (\ref{bdu}),
\begin{equation}
\begin{array}{rl}
\dim L^\si
& = \dim \langle \De^d \rangle + |\De_1^d| +
\frac{1}{2} |\De \bsl \De^d| \\
& = n-2 + |\De_1^d| + 4n-6  = 5n-8 + |\De_1^d| .
\end{array}
\label{bsa}
\end{equation}

We have $|\De_1^d| =  |\De^d| = 2(n-2)(n-3)$,
so by (\ref{bsa}),
\[ \dim L^\si = 5n-8  + 2(n-2)(n-3)
= 2n^2 - 5n + 4.\]
The Kac diagram satisfies $L^\tau = D_{n-1} + \bc$, so
\[ \dim L^\tau = \dim (D_{n-1} + \bc) = 2(n-1)^2 - (n-1) +1
= 2n^2 - 5n + 4 .\]
Hence $\dim L^\si = \dim L^\tau$.

If $n=4$, then $L^\si = L^\tau$ by Theorem \ref{cahe}(c).
If $n > 4$, then $L^\tau$ is neither $A_{n-1} + \bc$ nor
$\fs \fo(n + \sqrt{n}, \bc) + \fs \fo(n - \sqrt{n}, \bc)$,
so again $L^\si = L^\tau$ by Theorem \ref{cahe}(c).
This solves Figure 1(l).

\sk

\noindent \underline{Figure 1(m)}

In the affine Vogan diagram, the root $e_m - e_{m+1}$ is circled.
So we have
\[ \De_1^d = \{\pm e_i \pm e_j \;;\;
1 < i < j \leq m \mbox{ or } m+1 \leq i < j < n\} ,\]
and hence
\begin{equation}
 |\De_1^d| = 2(m-1)(m-2) + 2(n-m-1)(n-m-2) .
 \label{ttn}
 \end{equation}
By (\ref{bsa}) and (\ref{ttn}),
\begin{equation}
\begin{array}{rl}
\dim L^\si
& = 5n-8 + 2(m-1)(m-2) + 2(n-m-1)(n-m-2) \\
& = -n + 4m^2 + 2n^2 -4nm .
\end{array}
\label{li}
\end{equation}

The Kac diagram satisfies $L^\tau = D_m + D_{n-m}$, so
\[ \dim L^\tau = \dim D_m + \dim D_{n-m}
= 2m^2 -m + 2(n-m)^2 - (n-m) ,\]
which equals (\ref{li}).
We conclude that
\begin{equation}
\dim L^\si = \dim L^\tau \;,\;
L^\tau = D_m + D_{n-m} .
\label{jol}
\end{equation}

If $n=4$, then Theorem \ref{cahe}(c) and (\ref{jol}) imply
that $L^\si \cong L^\tau$. Suppose that $n > 4$.
We remove the circled vertex from the affine Vogan diagram.
It results in the Vogan diagram of $D_m +D_{n-m}$,
and the diagram involution leads to
\[ B_{m-1} + B_{n-m-1} = D_m^\si + D_{n-m}^\si
\subset L^\si .\]
For $n > 4$,
$B_{m-1} + B_{n-m-1}$ cannot be imbedded into $A_{n-1}$, so
\begin{equation}
L^\si \neq A_{n-1} + \bc .
\label{pon}
\end{equation}
By Theorem \ref{cahe}(c), (\ref{jol}) and (\ref{pon}),
it follows that $L^\si = D_m + D_{n-m}$.

\sk

The following proposition will help to handle
Figures 1(n) and 1(o).

\begin{proposition}
$\,$

\noindent {\rm (a)} $\,$ $A_7$ and $D_6$ do not contain
any subalgebra isomorphic to $F_4$;

\noindent {\rm (b)} $\,$ $D_6$ does not contain any subalgebra
isomorphic to $C_4$.
\label{bun}
\end{proposition}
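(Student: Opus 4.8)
The plan is to translate each non-embedding statement into a statement about representations, and then obstruct it either by a dimension count (part (a)) or by an invariant-form argument (part (b)). The key observation is that an embedding of a simple Lie algebra $\fs$ into a classical Lie algebra $\fg$ restricts the defining representation of $\fg$ to a representation of $\fs$: since $\fs$ is simple, any nonzero representation is faithful, and by Weyl's theorem it decomposes as a direct sum of irreducibles. I would recall the single external input needed, namely that the smallest nontrivial irreducible representation of $F_4$ has dimension $26$.

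For part (a), suppose $F_4 \hra A_7 = \fs\fl(8,\bc)$ or $F_4 \hra D_6 = \fs\fo(12,\bc)$. Restricting the defining representations $\bc^8$ and $\bc^{12}$ to $F_4$ produces faithful representations of $F_4$ of dimensions $8$ and $12$ respectively. Each is therefore a sum of irreducibles, at least one of which is nontrivial, forcing the total dimension to be at least $26$. Since $8 < 26$ and $12 < 26$, both cases are impossible, which proves (a).

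For part (b), suppose $C_4 = \fs\fp(8,\bc) \hra D_6 = \fs\fo(12,\bc)$. Then $\bc^{12}$ becomes a $12$-dimensional faithful representation $W$ of $\fs\fp(8,\bc)$ carrying a nondegenerate invariant symmetric bilinear form $B$. The nontrivial irreducibles of $\fs\fp(8,\bc)$ have dimensions $8, 27, 42, 48, \dots$, so the only one of dimension at most $12$ is the standard representation $V = \bc^8$; hence $W \cong V \oplus T$ with $T$ a $4$-dimensional trivial summand. I would then use that $V$ is self-dual of symplectic type, so its unique (up to scalar) invariant bilinear form is antisymmetric; consequently $B|_{V \times V} = 0$. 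Moreover $B(V,T) = 0$, since such a pairing would correspond to a nonzero $\fs\fp(8,\bc)$-map $V \ra T^* \cong \bc^4$, which does not exist as $V$ is nontrivial and irreducible. Thus $V$ lies in the radical of $B$, contradicting nondegeneracy, and (b) follows.

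The routine part is the bookkeeping of irreducible dimensions; the one place requiring care is the invariant-form step in (b), where a naive dimension count fails because $\fs\fp(8,\bc)$ \emph{does} admit a faithful $12$-dimensional representation. The real obstruction is that this representation is of symplectic rather than orthogonal type, and I expect this to be the main (though still short) obstacle. I would secure it by the Schur-type argument above, or alternatively by consulting a branching table for $D_6 \supset C_4$.
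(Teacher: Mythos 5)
Your proposal is correct. For part (a) it is essentially the paper's own argument: restrict the natural representation ($\bc^8$ or $\bc^{12}$) to the would-be $F_4$ subalgebra and use that $F_4$ has no nontrivial irreducible representation of dimension $\leq 12$ (the smallest is $26$); the paper phrases the contradiction via the kernel of the natural representation being a nontrivial ideal of a simple algebra, you phrase it via faithfulness forcing a nontrivial irreducible summand of dimension at least $26$ --- the same obstruction in contrapositive form. For part (b), however, you genuinely diverge: the paper disposes of $C_4 \not\hra D_6$ simply by citing the Lorente--Gruber tables of semisimple subalgebras of $D_6$, whereas you give a self-contained proof: the only nontrivial irreducible of $\fs \fp(8,\bc)$ of dimension $\leq 12$ is the standard $V=\bc^8$, so $\bc^{12}$ would decompose as $V$ plus trivial summands, and since $V$ is of symplectic type its unique invariant bilinear form is alternating, which together with $\mathrm{Hom}_{C_4}(V,\bc)=0$ forces $V$ into the radical of the nondegenerate symmetric form preserved by $\fs \fo(12,\bc)$ --- a contradiction. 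Your route buys independence from classification tables and identifies the true obstruction (the symplectic type of $V$, not a mere dimension count, since a faithful $12$-dimensional representation of $C_4$ does exist); the paper's citation buys brevity. One harmless bookkeeping slip: your list of small $C_4$-irreducible dimensions omits the $36$-dimensional adjoint, but the argument only needs that $8$ is the unique nontrivial dimension $\leq 12$, which is true.
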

\begin{proof}
Let $L$ denote $A_7$ or $D_6$, and suppose that
$F_4$ is imbedded into $L$.
Consider the natural representation
$\pi: L \lra \mbox{End} (\bc^n)$,
where $n=8$ for $L= \fs \fl(8,\bc)$,
and $n=12$ for $L= \fs \fo(12,\bc)$.
It leads to an $F_4$-representation of dimension $\leq 12$.
But the Weyl dimension formula \cite[V-6]{kn} shows that
the only $F_4$-representation of dimension $\leq 12$ is the
trivial representation,
so $F_4$ is imbedded into the kernel of $\pi$.
Hence the kernel of $\pi$ is a nontrivial ideal of $L$,
which is a contradiction.
This proves part (a).
Part (b) follows from the lists of subalgebras of $D_6$ in
\cite[Tables IV,XIII]{lg}.
\end{proof}

\sk

\noindent \underline{Figure 1(n)}

Remove two endpoints from the affine Vogan diagram
to obtain the Vogan diagram of $E_6$.
Its diagram involution leads to
$F_4 = E_6^\si \subset E_7^\si$.
By \cite[Fig.1-2]{c1},
\begin{equation}
E_7^\si \in \{E_6 + \bc \;,\; A_7 \;,\; D_6 + A_1\} .
\label{jas}
\end{equation}
But Proposition \ref{bun}(a) has excluded
$A_7$ and $D_6 + A_1$,
hence $E_7^\si = E_6 + \bc$.
This solves Figure 1(n).

\sk

Two affine Vogan diagrams are said to be equivalent
if they represent the same involution.
Given an affine Vogan diagram with circled vertex $\al$,
we define
\[
\begin{array}{cl}
F_\al: &
\mbox{reverse the circlings of all vertices $\be$ adjacent to $\al$,}\\
& \mbox{except when $\be$ is a longer root joint to $\al$
by a double edge,}\\
& \mbox{or when $\be$ is not fixed by the diagram involution.}
\end{array}
\]
Two affine Vogan diagrams are equivalent if and only if they are related
by a sequence of such $F_\al$. Here Figure 1(o) and Figure 1(p)
are equivalent to each other, and they are not equivalent to Figure 1(n)
\cite[Table 2]{ch2}\cite[Thm.5.1]{ch}.

\sk

\noindent \underline{Figure 1(o), 1(p)}

In the affine Vogan diagram of Figure 1(o),
remove the circled vertex
to obtain the Vogan diagram of $A_7$.
Its diagram involution leads to
$C_4 = A_7^\si \subset E_7^\si$.

The affine Vogan diagrams in Figures 1(n) and 1(o)
are not equivalent.
Since $E_6 + \bc$ is the invariant subalgebra of Figure 1(n),
by (\ref{jas}),
we are left with $A_7$ and $D_6 + A_1$
as candidates for $E_7^\si$ in Figure 1(o).
By Proposition \ref{bun}(b), it cannot be $D_6 + A_1$.
Hence $E_7^\si = A_7$ for Figure 1(o).
The affine Vogan diagrams in Figures 1(o) and 1(p) are equivalent.

\sk

\noindent {\it Proof of Theorem \ref{thm1}:}

Theorem \ref{thm1}(a) follows from Proposition \ref{exx}.
The first part of Theorem \ref{thm1}(b) follows from
Proposition \ref{licy}.
It remains to show that the involutions are given by
the accompanying Kac diagrams and invariant subalgebras
in Figure 1,
and this has been checked case-by-case
in this section. \qed


\newpage
\section{Associate Symmetric Pairs} \label{as}
\setcounter{equation}{0}

In this section, we prove Theorem \ref{thmcorr}.
Let $L$ be a complex simple Lie algebra
with real form $\fg$.
The Cartan involution $\thq$ of $\fg$ extends to $L$
by complex linearity.
There exists a Cartan subalgebra $H$ of $L$
such that $\thq$ stabilizes $H$ and
a simple system $\Pi$.
We have the sum of eigenspaces $H = H^\thq + H^{-\thq}$.
Let $\De \subset H^*$ be the roots, and
let $\De^\thq$ be their restrictions to $H^\thq$.
We have the root space decomposition
\begin{equation}
 L = H^\thq + \sum_{\De^\thq} L_x .
 \label{0307}
 \end{equation}

The Kac diagram of $\fg$ appears on $\dyr$,
whose vertices represent a simple system of $\De^\thq$
with a lowest weight.
We have
\begin{equation}
\begin{array}{cl}
\mbox{(a)} & H = H^\thq \; \Longleftrightarrow \;
\thq \mbox{ acts trivially on } \Pi
\; \Longleftrightarrow \;
r =1 ,\\
\mbox{(b)} & H^{-\thq} \neq 0 \; \Longleftrightarrow \;
\thq \mbox{ permutes } \Pi \mbox{ nontrivially }
\; \Longleftrightarrow \;
r =2 .
\end{array}
\label{bes}
\end{equation}

Let $\pi : \De \lra \De^\thq$ be the restriction of
$\De$ to $H^\thq$.
In (\ref{bes})(b), $\pi$ is 1-to-1 or 2-to-1.
Hence for all $x \in \De^\thq$, $\dim L_x$ is 1 or 2.
If $\dim L_x =2$, there exist distinct $\al, \be \in \De$ such that
$\pi(\al) = \pi(\be) = x$, and
\begin{equation}
 L_\al + L_\be = L_x = (L_x \cap L^\thq) + (L_x \cap L^{-\thq}) .
 \label{0308}
 \end{equation}

As explained in (\ref{idsp}),
the symmetric pairs $(\fg, \fg^\si)$ are identified with
commuting $L$-involutions $(\thq, \si)$.
The next definition provides their diagrammatic objects.
Recall that the Kac diagrams and affine Vogan diagrams
on $\dyr$ are defined in Definitions \ref{dkd} and \ref{dvd}.

\begin{definition}
{\rm \cite[Def.1.2]{ch}
A {\it double Vogan diagram} on $\dyr$ is a triple $(p,c,d)$,
where $p$ is a Kac diagram, $(c,d)$ is an affine Vogan diagram,
and $d$ preserves $p$.
We say that $(p,c,d)$ represents commuting $(\thq, \si)$ if
$p$ represents $\thq$ in the sense of Theorem \ref{kd}, and
\begin{equation}
\begin{array}{l}
\si (L_x \cap L^{\pm \thq})
= L_{dx} \cap L^{\pm \thq} \mbox{ for all } x \in \dyr, \\
\mbox{and whenever } dx = x:\\
\mbox{$x$ white, uncircled} \; \Longrightarrow \;
\mbox{$\si =1$ on $L_x \cap L^\thq$}, \\
\mbox{$x$ white, circled} \; \Longrightarrow \;
\mbox{$\si = -1$ on $L_x \cap L^\thq$}, \\
\mbox{$x$ black, uncircled} \; \Longrightarrow \;
\mbox{$\si =1$ on $L_x \cap L^{-\thq}$}, \\
\mbox{$x$ black, circled} \; \Longrightarrow \;
\mbox{$\si = -1$ on $L_x \cap L^{-\thq}$}.
\end{array}
\label{ten}
\end{equation}
\label{suim}
}
\end{definition}

The condition $d$ preserves $p$ means that
all $\al$ and $d \al$ have the same color.
It ensures that $\thq$ and $\si$ commute.

For all $x \in \dyo$ and some $x \in \dyt$,
we have $\dim L_x = 1$. In that case
(\ref{ten}) simplifies to (\ref{dew}),
namely the circling of $x$ determines $\si|_{L_x} = \pm 1$.
But for $\dim L_x = 2$ in $\dyt$,
we need the more complicated (\ref{ten})
because $\si$
may have distinct eigenvalues on $L_x$.
We will further discuss this issue in
the next section.
Nevertheless for the sake of proving Theorem \ref{thmcorr},
all vertices $x$ in Figure 2 other than
the endpoints of $\dyt$ in Figure 2(c)
satisfy $\dim L_x =1$, so we can repeat the
earlier arguments.

The explicit matchings
between double Vogan diagrams and symmetric pairs are given in
\cite[\S7]{ch}.
However, one may mismatch the double Vogan diagrams
of associate pairs
$(\fg, \fg^\si)$ and $(\fg, \fg^{\thq \si})$,
especially when they are
not extensions of Vogan diagrams on $\dy$.
For the three associate pairs in (\ref{rfi}),
such errors occur in
pages 1741, 1742, 1747 of \cite{ch}.
Figure 2 has interchanged
$(\fg, \fg^\si)$ and $(\fg, \fg^{\thq \si})$.
We now apply Theorem \ref{thm1} to show that
Figure 2 provides the correct matchings.

\sk

\noindent {\it Proof of Theorem \ref{thmcorr}:}

Two double Vogan diagrams represent associate pairs
if they coincide on the white vertices, and their circlings on
the black vertices are opposite. We first consider Figure 2(a),
\[ (\fs \fp(m,m),\fs \fp(m,\bc)) \,,\,
(\fs \fp(m,m), \fg \fl(m,\bh)) .\]
The left affine Vogan diagram is
Figure 1(h), namely $L^\si = C_m^2$.
The right affine Vogan diagram is
Figure 1(g), namely $L^\si = A_{n-1}+ \bc$, where $n=2m$. We have
\[ \fs \fp(m,\bc)^2 \otimes \bc = C_m^2 \; \mbox{ and } \;
\fg \fl(m, \bh) \otimes \bc = A_{n-1}+ \bc , \]
so the symmetric pairs and diagrams are matched as
in Figure 2(a).

Next we consider Figure 2(b),
\[ (\fs \fo(n,n), \fg \fl (n,\br)) \,,\,
(\fs \fo(n,n), \fs \fo(n,\bc)) \,,\, n \mbox{ even}.\]
The left affine Vogan diagram is Figure 1(k),
namely $L^\si = A_{n-1}+\bc$.
The right affine Vogan diagram is Figure 1(j),
namely $L^\si = D_m^2$, where $n=2m$. We have
\[ \fg \fl(n, \br) \otimes \bc = A_{n-1}+\bc \; \mbox{ and } \;
 \fs \fo(n, \bc) \otimes \bc = D_m^2 ,\]
so the symmetric pairs and diagrams are matched as
in Figure 2(b).

Finally we consider Figure 2(c),
\[ (\fs \fo(n,n), \fg \fl (n,\br)) \,,\,
(\fs \fo(n,n), \fs \fo(n,\bc)) \,,\, n \mbox{ odd}.\]
Here we cannot apply Figure 1
because the affine Vogan diagrams are drawn on $\dyt$.
Nevertheless each $x \in \dyt$ has $\dim L_x =1$
except for the endpoints.
So we can compare the dimensions of their
invariant subalgebras by the method used in Section 3.

Let the left and right double Vogan diagrams of Figure 2(c)
represent commuting $L$-involutions
$(\thq, \si_1)$ and $(\thq,\si_2)$ respectively.
We compare the dimensions of $L^{\si_1}$
and $L^{\si_2}$. Since the middle vertex of the left diagram
(resp. right diagram) is not circled (resp. is circled),
$\si_1$ acts as identity on more root spaces than $\si_2$, hence
\begin{equation}
\dim L^{\si_1} > \dim L^{\si_2} .
\label{csa}
\end{equation}
We also have real dimensions
\begin{equation}
\dim \fg \fl(n, \br) = n^2 \;,\;
\dim \fs \fo(n, \bc) = n^2 - n .
\label{cdu}
\end{equation}
By (\ref{csa}) and (\ref{cdu}), it follows that
$L^{\si_1} = \fg \fl(n, \br) \otimes \bc$ and
$L^{\si_2} = \fs \fo(n, \bc) \otimes \bc$.
So the symmetric pairs and diagrams are matched as
in Figure 2(c).
This completes the proof of Theorem \ref{thmcorr}. \qed


\newpage
\section{Symplectic Symmetric Pairs}
\label{ss}
\setcounter{equation}{0}

In this section, we prove Theorem \ref{thm2} and Corollary \ref{fma}.
Let $\fg$ be a real form of $L$ with Cartan involution $\thq$,
and $\fk = \fg^\thq$ is a maximal compact subalgebra
of $\fg$.
We say that $\fg$ is of Hermitian type if at the Lie group level,
$G/K$ has a $G$-invariant Hermitian structure. In that case we also say
that $\thq$ is of Hermitian type.
We say that $\fg$ is of equal rank type if $\fg$ and $\fk$ have
the same rank. We say that $\thq$ is inner if it lies in the
identity component of the Lie group $\aut(\fg)$,
and we say that it is outer otherwise.
Let $\fz(\cdot)$ denote the center.
The Kac diagrams on $\dyr$ and the Vogan diagrams $(c,d)$ on $\dy$
can occur in three cases as listed in (\ref{big}) below.
Its arguments
can be found in Theorem \ref{kd} and
\cite[(2.2),(4,4)]{c1}.
Recall that $\sum_{\dyr} m_\al \al =0$.

\begin{equation}
\begin{tabular}{|c|c|c|c|c|} \hline
Kac diagram & Vogan diagram & $\thq$ & $\fg$ & $\fz(\fk)$ \\ \hline
$\dyo$, $\al, \be$ black, & $d=1$ & inner, & equal rank, & 1-dim \\
$m_\al = m_\be = 1$ & & Hermitian & Hermitian & \\
\hline
$\dyo$, $\al$ black, & $d=1$ & inner, & equal rank, & 0 \\
$m_\al = 2$ & & non-Hermitian & non-Hermitian & \\
\hline
$\dyt$, $\al$ black, & $d \neq 1$ & outer, & non-equal rank, & 0 \\
$m_\al = 1$ & & non-Hermitian & non-Hermitian &
\\ \hline
\end{tabular}
\label{big}
\end{equation}

Let us discuss the case where $\thq$ is outer.
It stabilizes a Cartan subalgebra $H$ of $L$,
and permutes a simple system nontrivially.
By (\ref{0307}), we have $L = H^\thq + \sum_{\De^\thq} L_x$,
where $\De^\thq$ are the restrictions of $\De$ to $H^\thq$.
Let $\pi: \De \lra \De^\thq$ be the restriction map.
By (\ref{0308}), if $\dim L_x =2$, then $L_x$
is the sum of $L_x \cap L^{\pm \thq}$,
as well as the sum $L_\al + L_\be$ where $\pi(\al)=\pi(\be)=x$.

\begin{lemma}
Let $\pi(\al)=\pi(\be)=x$, and
suppose that $\si \in \inv(L)$ stabilizes $L_x \cap L^{\pm \thq}$.
Then $\si$ has the same eigenvalue on $L_x \cap L^{\pm \thq}$
if and only if $\si$ stabilizes $L_\al$ and $L_\be$.
\label{reh}
\end{lemma}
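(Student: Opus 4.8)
The plan is to reduce the whole statement to elementary linear algebra on the two-dimensional space $V = L_x = L_\al + L_\be$, exploiting its two natural splittings: the root-space splitting $V = L_\al \oplus L_\be$ and the $\thq$-eigenspace splitting $V = (L_x \cap L^\thq) \oplus (L_x \cap L^{-\thq})$ coming from (\ref{0308}). The key preliminary is to understand how $\thq$ acts on $V$.

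First I would pin down the action of $\thq$ on the roots over $x$. Write the functionals as $\al = x + a$ and $\be = x + b$, where $x$ is their common restriction to $H^\thq$ and $a,b$ are the components vanishing on $H^\thq$ (living on $H^{-\thq}$). Since $\thq^*$ fixes $H^\thq$ and negates $H^{-\thq}$, we get $\thq^* \al = x - a$, which again restricts to $x$; as $\dim L_x = 2$ forces only $\al,\be$ to lie over $x$, we have $\thq^*\al \in \{\al,\be\}$. The case $\thq^*\al = \al$ gives $a = 0$, hence $\al = x$, and then $\thq^*\be \in\{\al,\be\}$ forces $b = 0$ and $\al = \be$, a contradiction; so $\thq^*\al = \be$ and $\thq$ interchanges $L_\al$ and $L_\be$. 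Choosing $0 \neq X_\al \in L_\al$ and setting $X_\be := \thq X_\al$ yields $X_\be \in L_\be$ with $\thq X_\al = X_\be$ and $\thq X_\be = X_\al$, whence $L_x \cap L^\thq = \bc(X_\al + X_\be)$ and $L_x \cap L^{-\thq} = \bc(X_\al - X_\be)$, both one-dimensional.

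Next, since $\si$ stabilizes each one-dimensional space $L_x \cap L^{\pm\thq}$ and $\si^2 = 1$, it acts on them as scalars $\la_+, \la_- \in \{1,-1\}$. Solving $\si(X_\al \pm X_\be) = \la_\pm (X_\al \pm X_\be)$ for the root vectors gives
\[ \si X_\al = \tfrac12(\la_+ + \la_-)\,X_\al + \tfrac12(\la_+ - \la_-)\,X_\be , \]
and symmetrically for $\si X_\be$. Hence $\si$ stabilizes $L_\al$ (equivalently $L_\be$) precisely when the off-diagonal coefficient $\la_+ - \la_-$ vanishes, i.e. when $\la_+ = \la_-$, which is exactly the assertion that $\si$ has the same eigenvalue on $L_x \cap L^\thq$ and $L_x \cap L^{-\thq}$. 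This proves both implications at once, and also records the complementary picture: when $\la_+ \neq \la_-$ one has $\si X_\al = \pm X_\be$, so $\si$ swaps $L_\al$ and $L_\be$.

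The only genuinely delicate point is the first step, verifying that $\thq$ must \emph{swap} rather than fix the two roots lying over $x$; this is the folding behaviour responsible for $\dim L_x = 2$. Once that is settled, the normalization $X_\be = \thq X_\al$ makes the eigenvectors $X_\al \pm X_\be$ transparent and the remaining $2\times 2$ change of basis is routine.
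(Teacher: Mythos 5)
Your proof is correct and follows essentially the same route as the paper's: pick $X_\be = \thq X_\al$ so that $L_x \cap L^{\pm\thq} = \bc(X_\al \pm X_\be)$, let $\si$ act by scalars on these two lines, and read off from the $2\times 2$ change of basis that $\si$ stabilizes $L_\al$ and $L_\be$ exactly when the two scalars agree. The only difference is that you also justify why $\thq$ must interchange $L_\al$ and $L_\be$ (so that the normalization $\thq X_\al = X_\be$ is possible) — a detail the paper asserts without proof — which is a worthwhile addition but not a different argument.
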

\begin{proof}
Let $X_\al \in L_\al$ and $X_\be \in L_\be$ such that
$\thq X_\al = X_\be$ and
\[ L_x \cap L^{\pm \thq} = \bc(X_\al \pm X_\be) .\]

Suppose that $\si \in \inv(L)$ stabilizes $L_x \cap L^{\pm \thq}$.
Let $a,b \in \{\pm 1\}$ be their eigenvalues, namely
\[ \si (X_\al + X_\be) = a(X_\al + X_\be) \;,\;
\si (X_\al - X_\be) = b(X_\al - X_\be) .\]
It follows that
\[ \si X_\al =
\left\{
\begin{array}{cl}
\pm X_\al & \mbox{if } a=b ,\\
\pm X_\be & \mbox{if } a =-b .
\end{array}
\right. \]
Hence $\si$ stabilizes $L_\al$ and $L_\be$ if and only if
$a=b$, and the lemma follows.
\end{proof}

Let $c$ be a circling on $\dyt$.
Similar to Definition \ref{dvd}, we hope for
$\si \in \inv(L)$ such that for all $x \in \dyt$,
\begin{equation}
 \si|_{L_x} = \left\{
\begin{array}{cl}
1 & \mbox{if } x \mbox{ is uncircled} ,\\
-1 & \mbox{if } x \mbox{ is circled}.
\end{array}
\right.
\label{tem}
\end{equation}
The obstruction to (\ref{tem}) is that if $\dim L_x =2$,
then $\si$ may have distinct eigenvalues on the
subspaces of $L_x$.
This is handled by the next proposition.
Let $c(\dyt) \subset \dyt$ denote the circled vertices.

\begin{proposition}
Let $c$ be a circling on $\dyt$. There exists $\si \in \inv(L)$
which satisfies (\ref{tem}) if and only if $\sum_{c(\dyt)} m_x$ is even.
\label{jass}
\end{proposition}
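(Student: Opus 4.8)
The plan is to reduce Proposition \ref{jass}, which concerns the twisted diagram $\dyt$ with some two-dimensional root spaces, to the untwisted situation on $\dyo$ already settled by Theorem \ref{vd}(b). The natural bridge is the diagram map $\pi : \dy \lra \dyt$ from (\ref{cana}) together with its effect on circlings. Given a circling $c$ on $\dyt$, I would pull it back to a circling $\pi^{-1}(c)$ on $\dy$, declaring $\al \in \dy$ circled precisely when $\pi(\al)$ is circled. The key bookkeeping point is the relation between $\sum_{c(\dyt)} m_x$ and the corresponding sum downstairs: each vertex $x \in \dyt$ with $\dim L_x = 1$ has a single preimage, while each $x$ with $\dim L_x = 2$ has two preimages $\al, \be$ with $m_\al = m_\be = m_x$. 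Hence $\sum_{\pi^{-1}(c)} m_\al = \sum_{c(\dyt)} m_x + \sum_{x \in c(\dyt),\, \dim L_x = 2} m_x$, and I would need to check that the added correction terms do not change the parity, or else track the parity shift explicitly so that the ``even'' condition transfers correctly.

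For the forward direction, suppose $\si \in \inv(L)$ satisfies (\ref{tem}). First I would observe that on any $x$ with $\dim L_x = 2$, the requirement $\si|_{L_x} = \pm 1$ forces $\si$ to have a common eigenvalue on $L_x \cap L^{\thq}$ and $L_x \cap L^{-\thq}$; by Lemma \ref{reh} this means $\si$ stabilizes each of $L_\al$ and $L_\be$ with $\pi(\al) = \pi(\be) = x$. Thus $\si$ acts as a scalar $\pm 1$ on every individual root space $L_\al$ indexed by $\al \in \dy$, with the sign dictated by whether $\pi(\al)$ is circled. This exhibits $\si$ as being represented by the affine Vogan diagram $(\pi^{-1}(c), 1)$ on $\dyo$ in the sense of Definition \ref{dvd}, where the diagram involution is trivial. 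Applying Theorem \ref{vd}(b) then gives that $\sum_{\al \in \co} m_\al$ is even, where here $\co$ collects the circled fixed vertices; combining with the parity relation from the first paragraph yields that $\sum_{c(\dyt)} m_x$ is even.

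For the converse, I would start from the assumption that $\sum_{c(\dyt)} m_x$ is even, transport it to evenness of the associated sum on $\dyo$, and invoke Theorem \ref{vd}(b) to produce $\si \in \inv(L)$ represented by $(\pi^{-1}(c), 1)$ on $\dyo$. By construction this $\si$ acts as $+1$ or $-1$ on each $L_\al$ according to $\pi^{-1}(c)$, hence as the same scalar on both $L_\al$ and $L_\be$ whenever $\pi(\al) = \pi(\be)$. Lemma \ref{reh} (used in the opposite direction) then guarantees that $\si$ acts by a single scalar $\pm 1$ on the full space $L_x = L_\al + L_\be$, and this scalar is $-1$ exactly when $x$ is circled; together with the one-dimensional cases this is precisely (\ref{tem}). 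The main obstacle I anticipate is the parity bookkeeping in the first paragraph: I must verify that passing between $\sum_{c(\dyt)} m_x$ and its lift is parity-preserving (or correct for any discrepancy), since the doubled preimages for $\dim L_x = 2$ could in principle alter the parity unless those $m_x$ are constrained. This is where the explicit marks $m_\al$ from the twisted tables Aff 2 in \cite[Ch.4]{ka90} must be used, and where the compatibility $d$ preserves $p$ built into the double Vogan diagram keeps the construction self-consistent.
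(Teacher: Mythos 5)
Your reduction to Theorem \ref{vd}(b) does not work, and the failure is exactly at the point you flagged as ``parity bookkeeping.'' First, the pullback $\pi^{-1}(c)$ is a circling on $\dy$ only: the extra vertex $\vf\in\dyt$ lies outside $\pi(\dy)$, so its circling (and its mark, which enters $\sum_{c(\dyt)}m_x$) is simply discarded. Hence $(\pi^{-1}(c),1)$ is a Vogan diagram on $\dy$, \emph{not} an affine Vogan diagram on $\dyo$, and by Theorem \ref{vd}(a) such a diagram \emph{always} represents an involution, so no parity condition can be extracted from it. Even if you extend it to $\dyo$, Proposition \ref{exx} shows the extension representing $\si$ always has even sum, so again nothing about the parity of $\sum_{c(\dyt)}m_x$ follows. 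Second, the mark identity you rely on is false: for $L=A_{2n-1}$ with the diagram flip, every mark on $\dy$ equals $1$, while the middle vertices of $\dyt=A_{2n-1}^{(2)}$ carry $m_x=2$ and the vertices $\pi(\al_1)$, $\pi(\al_n)$, $\vf$ carry $m_x=1$. Thus circling only $\pi(\al_1)$ gives $\sum_{c(\dyt)}m_x=1$ (odd) while the pulled-back sum is $m_{\al_1}+m_{\al_{2n-1}}=2$ (even): there is no consistent parity relation between the two sums, so the transfer you hope for cannot be repaired.

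The same example shows your converse direction proves a false statement. Take $c$ to circle only $\pi(\al_1)$ on $A_{2n-1}^{(2)}$; since the sum is odd, Proposition \ref{jass} says no $\si$ satisfying (\ref{tem}) exists. Your construction nevertheless produces the inner involution $\si$ with $\si|_H=1$, $\si=-1$ on $L_{\al_1},L_{\al_{2n-1}}$ and $\si=+1$ on the other simple root spaces; it does act by a single scalar on each $L_x$ with $x\in\pi(\dy)$, matching $c$ there, but on $L_\vf$ (spanned by root vectors for $-(e_1-e_{2n-1})$ and $-(e_2-e_{2n})$, each of whose expansions contains exactly one of $\al_1,\al_{2n-1}$) it acts by $-1$, whereas $\vf$ is uncircled. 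The sign of $\si$ on $L_\vf$ is forced by its signs on the other restricted root spaces through the dependency $\sum_{\dyt}m_x x=0$, and whether that forced sign agrees with $c(\vf)$ is precisely where the parity condition lives --- it is invisible to any argument that forgets $\vf$. The paper's proof stays on $\dyt$ throughout: it produces $\si$ satisfying the weaker condition (\ref{ten}) from a double Vogan diagram via \cite[Thm.1.3]{ch}, and then invokes the innerness criterion \cite[Prop.8.2(b)]{c1}, namely $\sum_{c(\dyt)}m_x$ even if and only if $\si$ is inner, to decide (using Lemma \ref{reh}, which you do use correctly) whether (\ref{ten}) upgrades to (\ref{tem}). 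So the missing ingredient in your argument is that innerness criterion (or a direct computation with the relation $\sum_{\dyt}m_x x=0$ on restricted roots); Theorem \ref{vd}(b) is not the right tool here.
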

\begin{proof}
By (\ref{big}),
fix a Kac diagram $p$ on $\dyt$ so that
it represents an outer $L$-involution $\thq$.
Let $c$ be a circling on $\dyt$.
Then $(p,c)$ is a double Vogan diagram on $\dyt$ with $d=1$,
and there exists $\si \in \inv(L)$ which commutes with $\thq$
and satisfies (\ref{ten})
\cite[Thm.1.3]{ch}\cite[Thm.1.3]{c1}.
So $\si$ stabilizes $L_x \cap L^{\pm \thq}$
for all $x \in \dyt$.
We aim to strengthen (\ref{ten}) to (\ref{tem}),
and the obstruction is whether $\si$ has the
same eigenvalue on $L_x \cap L^{\pm \thq}$.

By \cite[Prop.8.2(b)]{c1},
\begin{equation}
 \sum_{c(\dyt)} m_\al \mbox{ is even}
\; \Longleftrightarrow \; \si \mbox{ is inner}.
\label{tun}
\end{equation}

Recall from (\ref{cana}) that $\dyt = \pi(\dy) \cup \{\vf\}$.
Here $\si$ stabilizes the simple system represented by $\dy$.
If the equivalent conditions in (\ref{tun}) occur,
then $\si|_H =1$ and
$\si$ stabilizes $L_\al$ for all $\al \in \De$.
By Lemma \ref{reh}, $L_x$ is an eigenspace of $\si$.
So (\ref{ten}) can be strengthened to (\ref{tem}).

If the equivalent conditions in (\ref{tun}) fail,
then there exist distinct $\al, \be \in \dy$ such that
$\si$ interchanges $L_\al$ and $L_\be$.
Let $\pi(\al)= \pi(\be) = x \in \dyt$.
By Lemma \ref{reh}, $\si$ has distinct eigenvalues
on $L_x \cap L^{\pm \thq}$, so (\ref{tem}) is not satisfied.
This proves the proposition.
\end{proof}

By Proposition \ref{jass}, if $c$ is a circling on $\dyt$
such that $\sum_{c(\dyt)} m_x$ is even, then
$c$ determines $\si$ by (\ref{tem}).
If $\sum_{c(\dyt)} m_x$ is odd, then $c$ together with a
Kac diagram determine $(\thq, \si)$
by (\ref{ten}), but $c$ alone does not determine $\si$.

Recall that in (\ref{idsp}),
we identify commuting involutions $(\thq,\si)$
with symmetric pairs $(\fg,\fg^\si)$.
We say that $(\fg,\fg^\si)$ is
pseudo-Hermitian or symplectic if at the group level,
$G/G^\si$ carries such a $G$-invariant structure.
These notions can be replaced by
the following equivalent definitions,
which are more useful for us.
If $\fz(\fg^\si) \neq 0$, then
it is 1-dimensional, so it is
either compact ($\fz(\fg^\si) \subset \fk$) or split
($\fz(\fg^\si) \subset \fp$).

\begin{definition}
{\rm \cite[Thm.2.1]{bi}\cite[p.532]{sh}
The symmetric pair $(\fg, \fg^\si)$ is said to be
{\it symplectic} if $\fz(\fg^\si) \neq 0$.
In addition it is said to be {\it pseudo-Hermitian} if
$\fz(\fg^\si)$ is compact, and {\it non-pseudo-Hermitian}
if $\fz(\fg^\si)$ is split.}
\label{lad}
\end{definition}

Let us explain its motivation as discussed in \cite[\S3]{bi}.
Extend $\si$ to an $L$-involution by complex linearity.
Then $\si$ can be represented by a Vogan diagram
on $\dy$ with at most one black
vertex $\al$ \cite[Thm.6.96]{kn}.
Apply (\ref{big}) to $\si$ and we see that
\begin{equation}
\fz(L^\si) \neq 0 \; \Longleftrightarrow \; m_\al =1 \mbox{ and }
d=1 .
\label{pate}
\end{equation}
The condition $m_\al =1$ is expressed as $\mu^\De(h_\al^\De) =1$ in
\cite[Def.3.2]{bi}, where $\mu^\De$ is the highest root.

Let $G$ be a connected Lie group whose Lie algebra is $\fg$.
Let $d : \we^k \fg^* \lra \we^{k+1} \fg^*$ be the deRham operator
on the left invariant differential forms on $G$.
For $v \in \fg$, we define the contraction map and coadjoint map by
\[
\begin{array}{l}
\iota_v : \we^k \fg^* \lra \we^{k-1} \fg^* \;,\;
(\iota_v \om)(x_1, ..., x_{k-1}) =
\om(v,x_1, ..., x_{k-1}) ,\\
\ad_v^* : \we^k \fg^* \lra \we^k \fg^* \;,\;
(\ad_v^* \om)(x_1, ..., x_k) =
\sum_1^k \om(x_1, ..., [v,x_i], ..., x_k) .
\end{array}
\]
Suppose that the equivalent conditions in (\ref{pate}) hold.
Pick $0 \neq \phi \in \fz(\fg^\si)^*$
(denoted by $\be(h_\ga^\De, \cdot)$ in \cite[Thm.3.1(v)]{bi}),
and extend it to $\fg^*$ by the Killing form.
Then $d \phi$ satisfies
\begin{equation}
\begin{array}{cl}
\mbox{(a)} & \iota_v (d \phi) = \ad_v^* (d \phi) = 0
\mbox{ for all } v \in \fg^\si ,\\
\mbox{(b)} & \iota_v(d \phi) \neq 0
\mbox{ for all } v \in \fg \backslash \fg^\si .
\end{array}
\label{imp}
\end{equation}
If $\si$ is the differential of a $G$-involution
(still denoted by $\si$), then (\ref{imp})(a)
says that $d \phi$ is a $G$-invariant 2-form
on $G/G^\si$, and (\ref{imp})(b) says that
$d \phi$ is nondegenerate,
i.e. symplectic.

\sk

\noindent {\it Proof of Theorem \ref{thm2}:}

By Definition \ref{lad}, the symplectic non-pseudo-Hermitian
symmetric pairs $(\fg, \fg^\si)$
correspond to commuting $L$-involutions $(\thq, \si)$ such that
\begin{equation}
\begin{array}{cl}
\mbox{(a)} & \dim \fz(L^\si) =1 , \\
\mbox{(b)} & \thq = -1 \mbox{ on } \fz(L^\si) .
\end{array}
\label{oss}
\end{equation}
We look for double Vogan diagrams $(p,c,d)$ on $\dyr$
which represent (\ref{oss}).

We first prove Theorem \ref{thm2}(a).
Let $r=1$, namely $\fg$ is of equal rank type.
We claim that each of the following cases cannot occur,
\begin{equation}
\begin{array}{cl}
\mbox{(a)} & d \neq 1 \mbox{ and } d \mbox{ stabilizes } \dy ,\\
\mbox{(b)} & d = 1 .
\end{array}
\label{cto}
\end{equation}

By (\ref{big}),
condition (\ref{oss})(a) amounts to
$(c,d)$ being equivalent to a Kac diagram with
two black vertices,
namely $\si$ is of Hermitian type.
This excludes (\ref{cto})(a), because in that case
$\si$ is outer.
Suppose that (\ref{oss})(a) holds.
Then $\thq = 1$ on $\fz(L^\si)$ if and only if
$d=1$ \cite[Thm.1.1(a)]{mrl}.
Hence (\ref{oss})(b) excludes (\ref{cto})(b).
We have excluded both cases of (\ref{cto}),
hence $(c,d)$ is not an extension of
a Vogan diagram on $\dy$.
This proves Theorem \ref{thm2}(a).

Next we prove Theorem \ref{thm2}(b).
Let $r=2$, namely $\fg$ is of non-equal rank type.
Recall the diagram mapping $\pi: \dy \lra \dyt$ in (\ref{cana}).
Given a circling $c$ on $\dyt$, let $\pi^{-1}(c)$ be the circling
on $\dy$ such that $\al \in \dy$ is circled
if and only if $\pi(\al)$ is circled.

Assume for now that $d=1$.
We claim that (\ref{oss}) is equivalent to
\begin{equation}
\begin{array}{l}
\mbox{$\sum_{c(\dyt)} m_x$ is even,
$\pi^{-1}(c) =\{\al, \be\} \subset \dy$}\\
\mbox{with } m_\al = m_\be = 1 \mbox{ and } \pi(\al) = \pi(\be) .
\end{array}
\label{sso}
\end{equation}
To begin with, we need $\sum_{c(\dyt)} m_x$ is even
because otherwise $\si$ is outer \cite[Prop.8.2(b)]{c1},
which implies that $\fz(L^\si)=0$.
If $\sum_{c(\dyt)} m_x$ is even,
then Proposition \ref{jass} says that $c$ and $\pi^{-1}(c)$ represent
the same involution because
(\ref{ten}) and (\ref{tem}) are the same.
By (\ref{big}), the condition (\ref{oss})(a) is equivalent to
$\pi^{-1}(c) =\{\al, \be\} \subset \dy$ and $m_\al = m_\be = 1$,
namely $\si$ is a Hermitian involution.
Suppose that (\ref{oss})(a) holds.
Then (\ref{oss})(b) is equivalent to $\thq$ interchanging
$L_\al$ and $L_\be$ \cite[Prop.2.4(b)]{ch}
(the roles of $\thq$ and $\si$ are reversed in the formulation
of \cite[Prop.2.4(b)]{ch}),
which is is equivalent to $\pi(\al) = \pi(\be)$.
Hence when $d=1$, (\ref{oss}) is equivalent to (\ref{sso}).

Next we consider $d \neq 1$ on $\dyt$.
By \cite[Fig.3]{c1}, this happens only on
the Kac diagrams of
\begin{equation}
\begin{array}{cl}
\mbox{(a)} & \fg = \fs \fl(n,\br) \;,\; n \mbox{ even},\\
\mbox{(b)} & \fg = \fs \fo(n,n) \;,\; n \mbox{ odd}.
\end{array}
\label{ys}
\end{equation}

For $\fs \fl(n,\bc)$, an inner involution
is equivalent to a Hermitian involution or identity.
In (\ref{ys})(a),
if we express the simple system of $\fs \fl(n,\bc)$
as $e_1-e_2, ..., e_{n-1}-e_n$, then $d$
permutes $e_1$ and $e_n$ \cite[Fig.10]{c1}.
Hence $(c,d)$ represents an inner involution
if and only if $c$ alone represents an
inner involution, namely $c$ is trivial or
satisfies (\ref{sso}).
The resulting symmetric pairs are
$(\fs \fl (n, \br), \fs \fl(p,\br) + \fs \fl(q, \br) + \br)$,
with $p,q$ odd.

In (\ref{ys})(b), we see from Figure 2(c) that
\begin{equation}
\begin{array}{cl}
\mbox{(a)} &
c \mbox{ trivial } \, \Longleftrightarrow \,
\fg^\si = \fg \fl(n,\br)
\, \Longrightarrow \, \dim \fz(L^\si) = 1 ,\\
\mbox{(b)} &
c \mbox{ circles middle vertex } \, \Longleftrightarrow \,
\fg^\si = \fs \fo(n,\bc)
\, \Longrightarrow \, \dim \fz(L^\si) = 0 .
\end{array}
\label{sot}
\end{equation}
Hence (\ref{sot})(a) satisfies the conditions in
Theorem \ref{thm2}(b), and
$(\fs \fo(n,n),\fg \fl(n,\br))$ is indeed
symplectic non-pseudo-Hermitian
because $\fg \fl(n,\br)$ has a split center.
We conclude that when $d \neq 1$, (\ref{ys}) leads to
$(\fs \fl (n, \br), \fs \fl(p,\br) + \fs \fl(q, \br) + \br)$
with $p,q$ odd,
and $(\fs \fo(n,n),\fg \fl(n,\br))$.  \qed

\sk

\noindent {\it Proof of Corollary \ref{fma}:}

Suppose that $\fg$ is of equal rank type.
By Theorem \ref{thm2}(a), we find all the affine Vogan diagrams
on $\dyo$ which are equivalent to Kac diagrams with two black vertices,
and are not extensions of Vogan diagrams on $\dy$.
By Theorem \ref{thm1}(b), they are given by the first column of
\begin{equation}
 \mbox{Figures 1(a), 1(c), 1(f),
 1(g), 1(k), 1(l), 1(n).}
\label{wau}
\end{equation}

We extend the affine Vogan diagrams in (\ref{wau})
to double Vogan diagrams on $\dyo$, and
find their symmetric pairs in \cite[\S7]{ch}.
For example, the first item of (\ref{wau}) is Figure 1(a),
so we superimpose it with a Kac diagram
which is preserved by $d$.
This leads to the first diagram of Figure 3,
and its symmetric pair is
$(\fs \fu(n,n), \fs \fl(n,\bc)+ \br)$ \cite[p.1740]{ch}.
The next item of (\ref{wau}) is Figure 1(c).
We again superimpose it with a Kac diagram
to obtain another diagram in Figure 3,
and its symmetric pair is
$(\fs \fo(p,q), \fs \fo(p-1,q-1) + \fs \fo(1,1))$
for $p+q$ odd \cite[p.1740]{ch}.
We repeat this method with the rest of (\ref{wau})
and obtain Figure 3.

Next we consider $\fg$ of non-equal rank type.
As discussed in the proof of Theorem \ref{thm2}(b),
the case of $d \neq 1$ leads to
$(\fs \fl (n, \br), \fs \fl(p,\br) + \fs \fl(q, \br) + \br)$
with $p,q$ odd,
and also $(\fs \fo(n,n),\fg \fl(n,\br))$.
They are indeed listed in Figure 4.

Let us consider the case of $d=1$.
There are all four $\dyt$ diagrams,
obtained from $A_{\rm odd}$, $A_{\rm even}$,
$D_n$ and $ E_6$.
Figure 6 describes $\pi: \dy \lra \dyt$
by mapping each $\al \in \dy$ to
$\pi(\al) \in \dyt$ located exactly below $\al$.
It shows the integers
$\{m_\al \;;\; \al \in \dy\}$ and
$\{m_x \;;\; x \in \dyt\}$.
It also shows a circling $c$ on each $\dyt$,
as well as $\pi^{-1}(c)$ on $\dy$.
One checks that these are precisely all the circlings
which satisfy (\ref{sso}).
Hence Figure 6 consists of all
affine Vogan diagrams $(c,d)$ with $d=1$
and satisfy Theorem \ref{thm2}(b).

We extend Figure 6 to double Vogan diagrams,
then find their symmetric pairs in \cite[\S7]{ch}.
For example
the first affine Vogan diagram in Figure 6 is for
$L = A_{\rm odd}$. We superimpose it with Kac diagrams.
There are two such Kac diagrams, given by
$\fg = \fs \fl(n,\br)$ for $n$ even, and
$\fg = \fs \fu^*(2n)$ \cite[Fig.3]{c1}.
The resulting diagrams
are listed in Figure 4, and their symmetric pairs are
$(\fs \fl(n,\br), \fs \fl(p,\br) + \fs \fl(q,\br) + \br)$
with $p,q$ even, as well as
$(\fs \fu^*(2n), \fs \fu^*(2p) + \fs \fu^*(2q) + \br)$
\cite[p.1746]{ch}.
We repeat this method with the rest of Figure 6
and obtain Figure 4. \qed

\sk

Figures 3 and 4 are consistent with Bieliavsky's classification
of symplectic non-pseudo-Hermitian symmetric pairs
\cite[p.268-269]{bi}, except for a minor error indicated below.

\sk

\begin{remark}
{\rm (Corrigendum to \cite[p.268]{bi})
The list in the bottom of \cite[p.268]{bi} contains
$\fg =\fs \fo(p,q)$. Its invariant subalgebra
should be $\fs \fo(p-1,q-1) + \fs \fo(1,1)$
instead of $\fs \fo(p-2,q) + \br$.
This is because $\fs \fo(1,1)$ is split and
$\fs \fo(2)$ is compact.
We have made the correction in Figures 3 and 4.}
\label{bisa}
\end{remark}


\newpage

\begin{figure}[h]
\includegraphics[width=5.4in,height=8.85in]{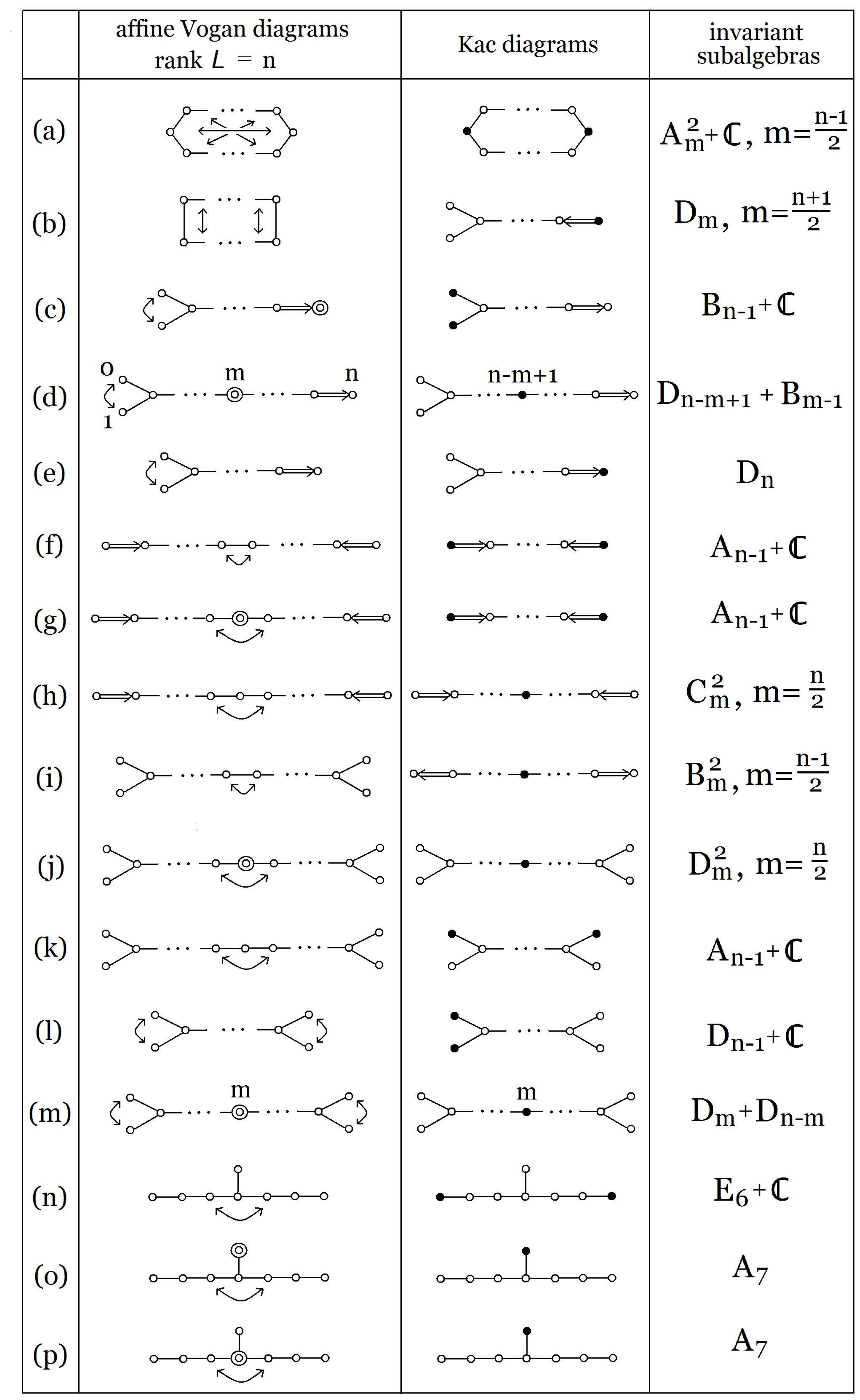}
\caption{Affine Vogan diagrams and Kac diagrams.}
\end{figure}

\begin{figure}[h]
\includegraphics[width=5in,height=2.4in]{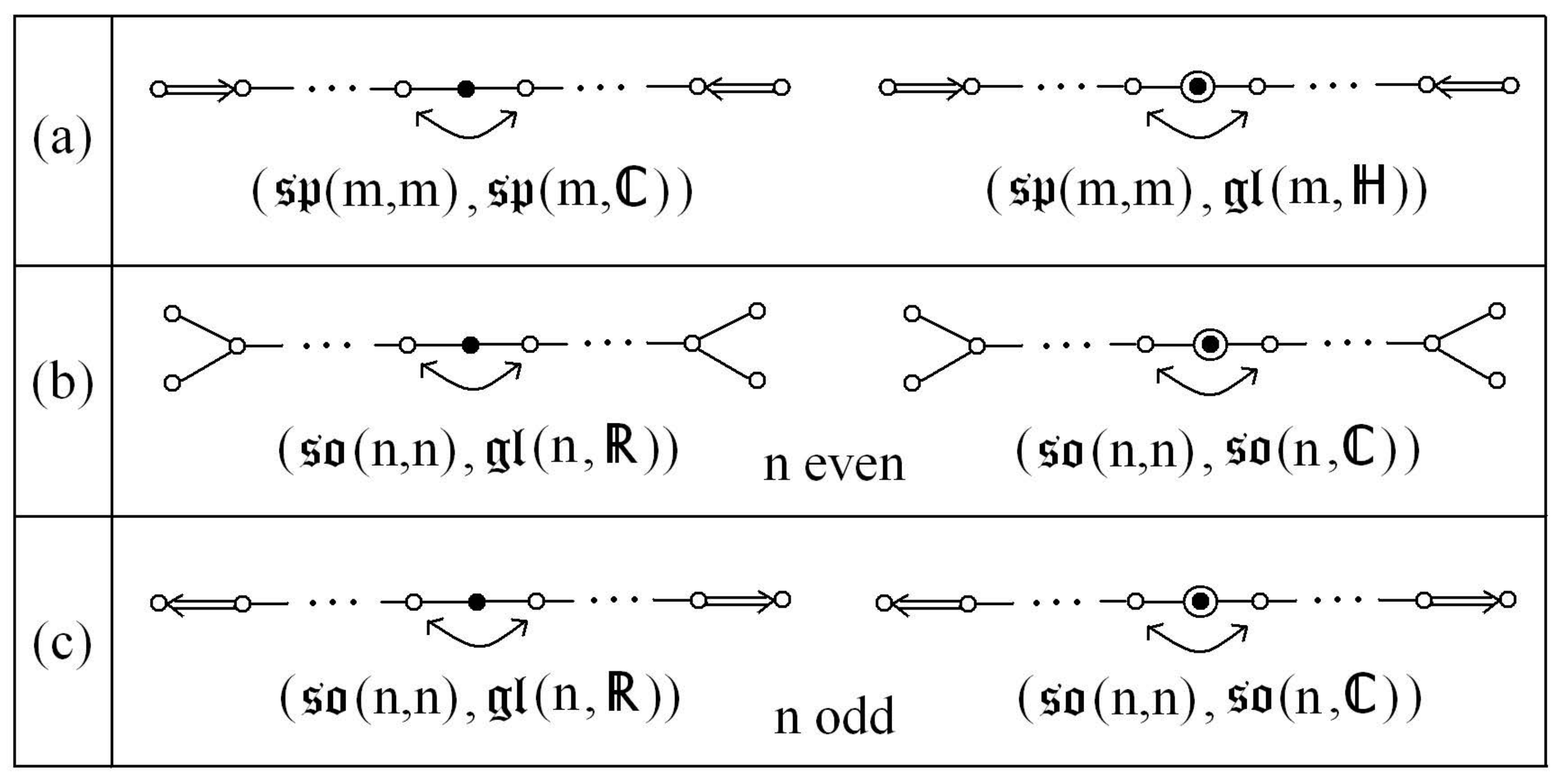}
\caption{Some associate symmetric pairs.}
\end{figure}

\begin{figure}[h]
\includegraphics[width=5in,height=3.9in]{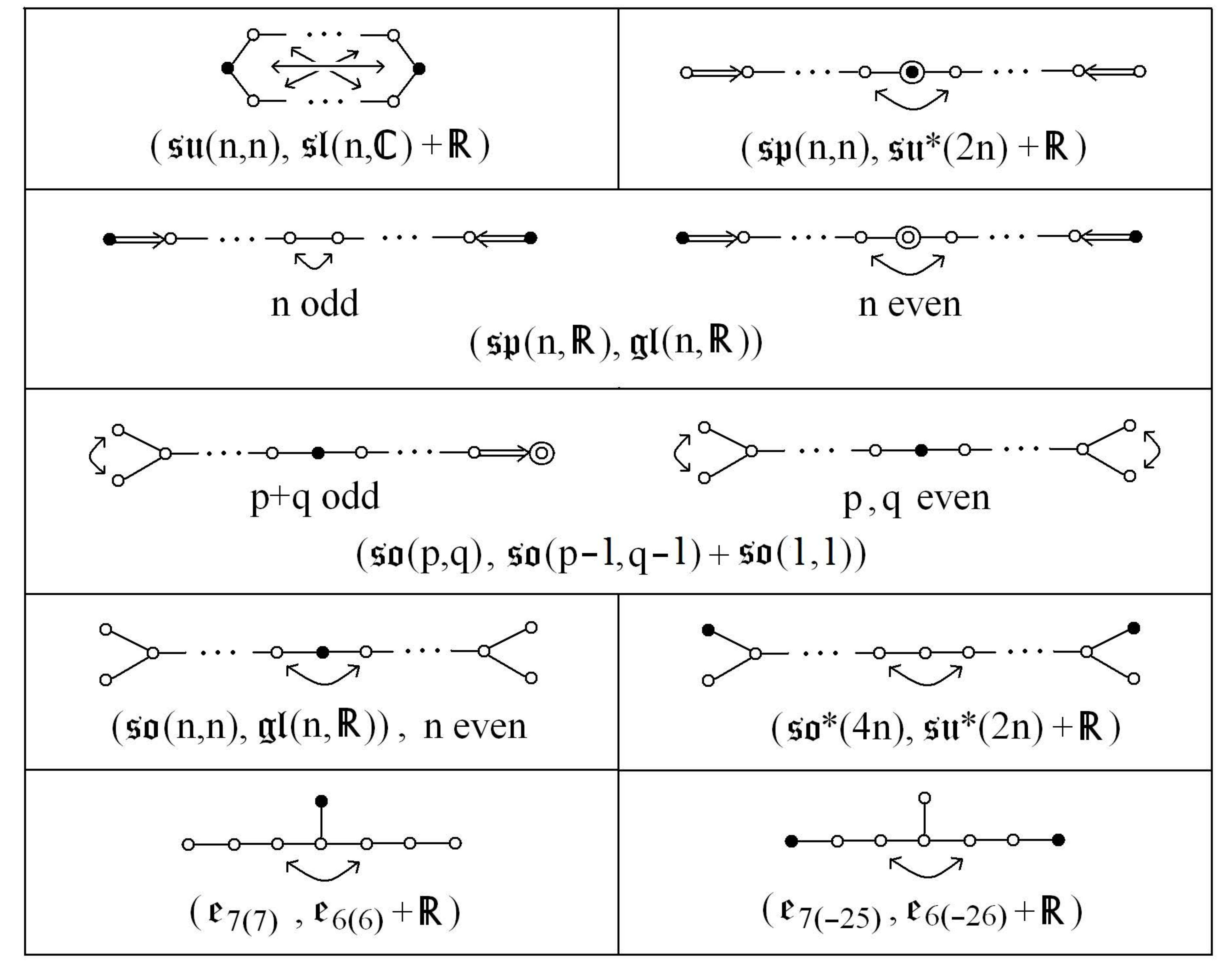}
\caption{Symplectic non-pseudo-Hermitian symmetric pairs,
$\fg$ equal rank type.}
\end{figure}

\begin{figure}[h]
\includegraphics[width=5in,height=2.76in]{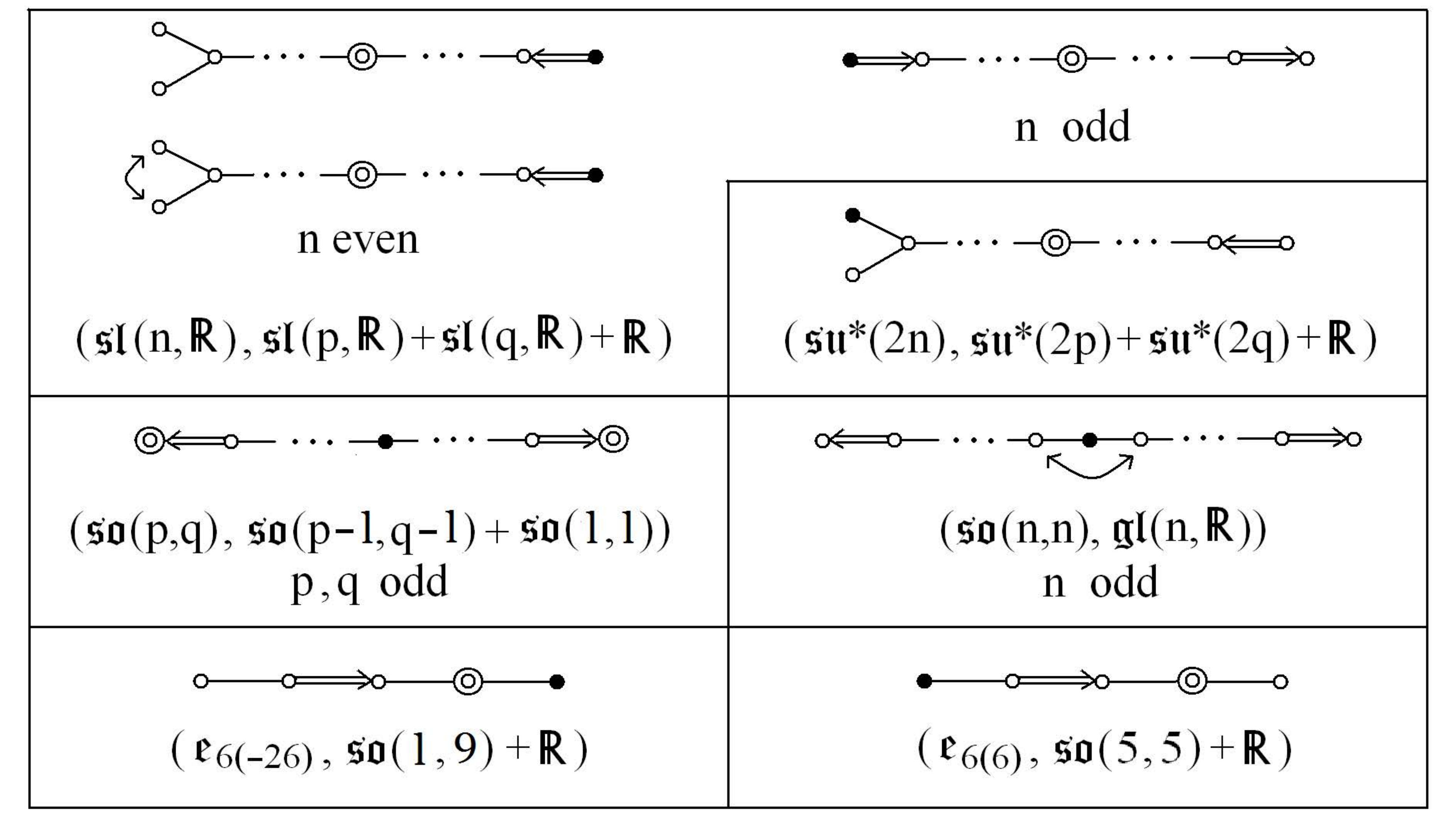}
\caption{Symplectic non-pseudo-Hermitian symmetric pairs,
$\fg$ non-equal rank type.}
\end{figure}

\begin{figure}[h]
\includegraphics[width=5.5in,height=1.32in]{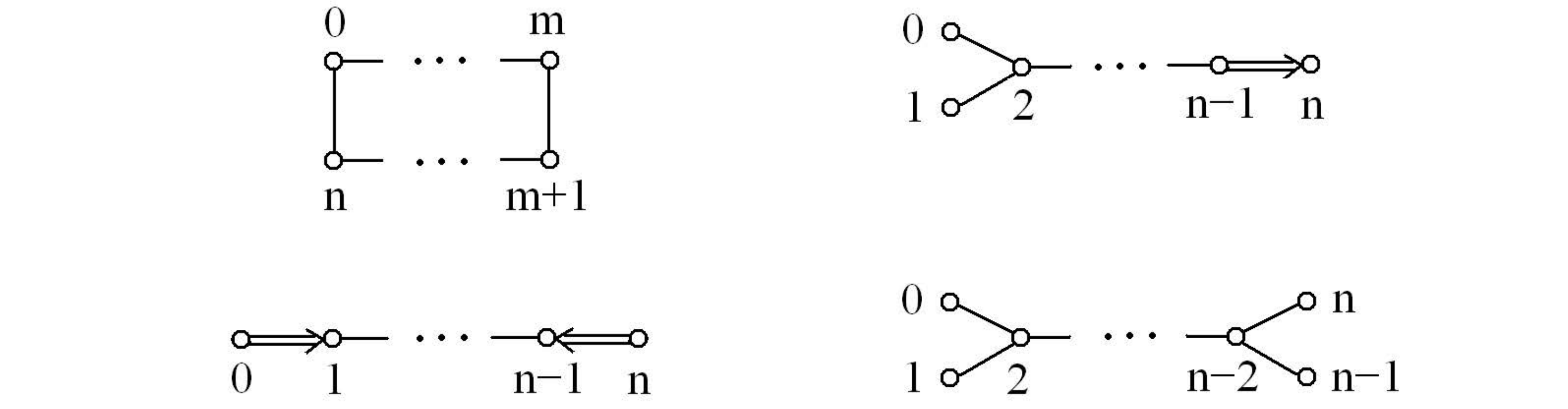}
\caption{Indices of vertices.}
\end{figure}

\begin{figure}[h]
\includegraphics[width=5.35in,height=3.6in]{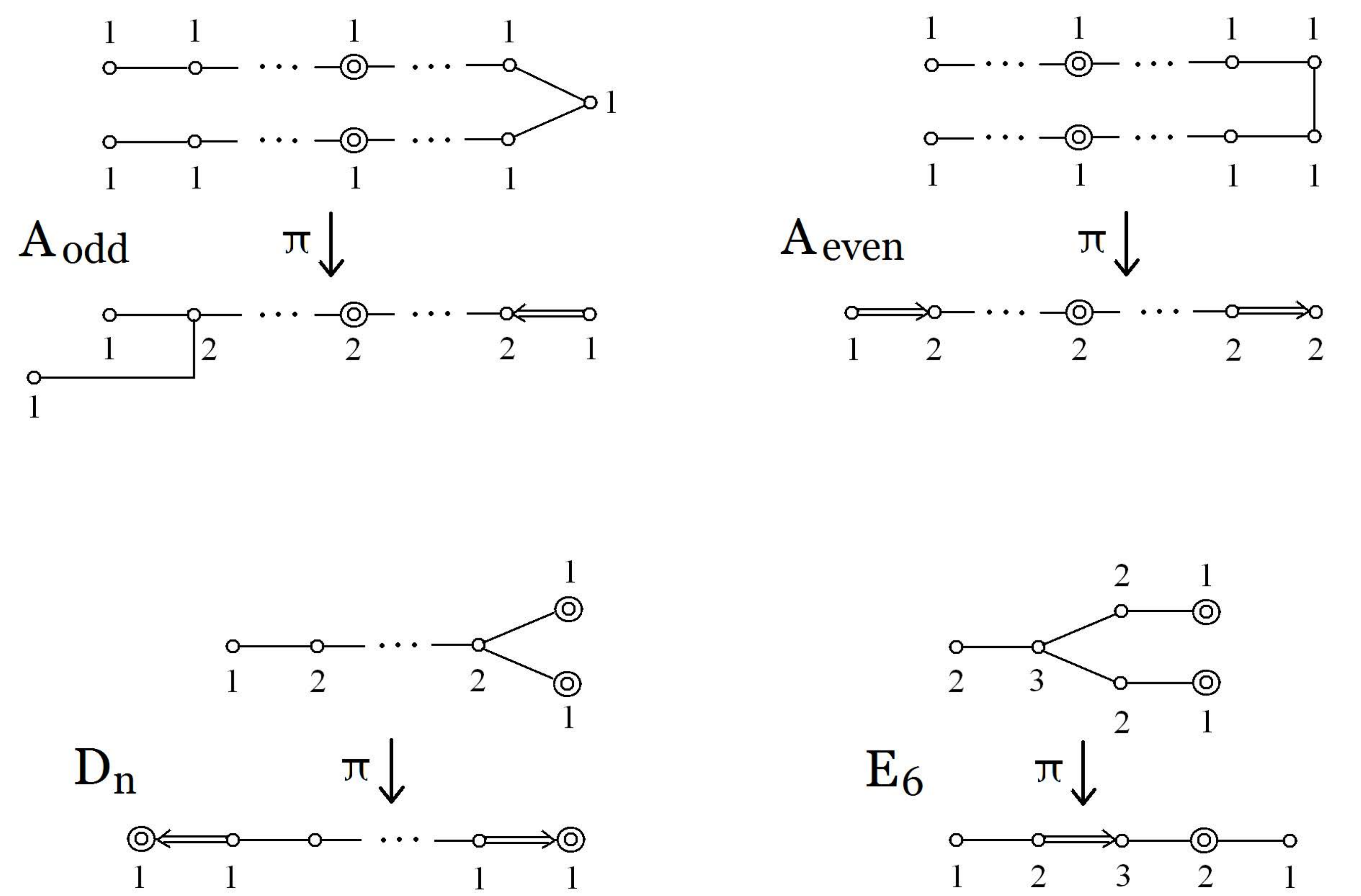}
\caption{Diagram mappings $\pi: \dy \lra \dyt$,
and circlings for Theorem \ref{thm2}(b).}
\end{figure}


\end{document}